\renewcommand\thesection{\arabic{section}}
\theoremstyle{theorem}
\newtheorem{Thm}{Theorem} [section]
\newtheorem{Prop}[Thm]{Proposition}
\newtheorem{Lem}[Thm]{Lemma}
\newtheorem{Ass}[Thm]{Assumption}
\newtheorem{Def}[Thm]{Definition}
\newtheorem{Rem}[Thm]{Remark}
\newcommand{\PP}{\mathds{P}}
\newcommand{\R}{\mathbb{R}}
\newcommand{\N}{\mathbb{N}}
\newcommand{\borel}{\mathfrak{B}}
\newcommand{\eps}{\varepsilon}
\DeclareMathOperator{\e}{e}
\newcommand{\Lra}{\Longrightarrow}
\newcommand{\Llra}{\Longleftrightarrow}
\newcommand{\ra}{\rightarrow}
\newcommand{\rra}{\rightrightarrows}
\newcommand{\Let}{\coloneqq}
\newcommand{\diff}{\mathrm{d}}
\newcommand{\wt}{\widetilde}
\newcommand{\tr}{^\intercal}
\newcommand{\ball}[2]{\mathbb{B}_{#2}(#1)}		
\newcommand{\Lnorm}{\mathcal{L}_2}
\DeclareMathOperator{\SCP}{SCP}
\DeclareMathOperator{\CCP}{CCP_\varepsilon}
\DeclareMathOperator{\RCP}{RCP}
\newcommand{\Jrcp}{J^\star_{\RCP}}
\newcommand{\Jccp}{J^\star_{\CCP}}
\newcommand{\Jscp}{J^\star_{N}}
\DeclareMathOperator{\CP}{CP}
\DeclareMathOperator{\RP}{RP}
\DeclareMathOperator{\SP}{SP}
\newcommand{\Jrp}{J^\star_{\RP}}
\newcommand{\Jcp}{J^\star_{\CP}}
\newcommand{\Jg}[1]{J^\star_{\RCP_{#1}}}
\DeclareMathOperator{\SCPk}{SCP^{(k)}}
\DeclareMathOperator{\CCPk}{CCP^{(k)}_{\varepsilon_k}}
\DeclareMathOperator{\RCPk}{RCP^{(k)}}
\newcommand{\X}{\mathbb{X}}
\newcommand{\D}{\mathcal{D}}
\newcommand{\C}{\mathcal{C}}
\newcommand{\sat}{\models}
\newcommand{\nsat}{\not\models} 
\newcommand{\lip}{L_{\normalfont{\textbf{SP}}}}
\newcommand{\Lcon}{\mathcal{C}}
\date{\today}
\title{Performance Bounds for the Scenario Approach and an Extension to a Class of Non-convex Programs}
\author[P. Mohajerin Esfahani]{Peyman Mohajerin Esfahani}
\author[T. Sutter]{Tobias Sutter}
\author[J. Lygeros]{John Lygeros}
\thanks{The authors are grateful to Diethard Klatte and Stefan Richter for helpful discussions and pointers to references, and Geir Dullerud for motivating the first example.}
\thanks{Research supported by the European Commission under the project MoVeS ({Grant Number 257005}), the HYCON2 Network of Excellence (FP7-ICT-2009-5), and by ETH grant (ETH-15 12-2).}
\thanks{The authors are with the Automatic Control Laboratory, Physikstrasse 3, ETH Z\"urich, 8092 Z\"urich, Switzerland, {\tt \{mohajerin,sutter,lygeros\}@control.ee.ethz.ch} }
\begin{document}
\maketitle

 \begin{abstract}
 	We consider the Scenario Convex Program (SCP) for two classes of optimization problems that are not tractable in general: Robust Convex Programs (RCPs) and Chance-Constrained Programs (CCPs). We establish a probabilistic bridge from the optimal value of SCP to the optimal values of RCP and CCP in which the uncertainty takes values in a general, possibly infinite dimensional, metric space. We then extend our results to a certain class of non-convex problems that includes, for example, binary decision variables. In the process, we also settle a measurability issue for a general class of scenario programs, which to date has been addressed by an assumption. Finally, we demonstrate the applicability of our results on a benchmark problem and a problem in fault detection and isolation. 
 \end{abstract}

\section{Introduction} \label{sec:introduction}

Optimization problems under uncertainty have considerable applications in disciplines ranging from mathematical finance to control engineering. For example most control systems involve some level of uncertainty; the aim of a robust control design is to provide a guaranteed level of performance for all admissible values of the uncertain parameters. In the convex case, two well-known approaches for dealing with such uncertain programs are robust convex programs (RCPs) and chance-constrained programs (CCPs). RCPs consider constraint satisfaction for all, possibly infinitely many, realizations of the uncertainty. While it is known that certain classes of RCPs can be solved as effectively as their non-robust counterparts \cite{ref:Bertsimas-06} in other cases RCPs can be intractable \cite{ref:BenTal-98, ref:BenTal-99, ref:Ghaoui-98, ref:BenTal-01}. For example, the class of parametric linear matrix inequalities, which occur in many control problems, is NP-hard \cite{ref:Boyd-94, ref:Gahinet-96}. CCPs, on the other hand, allow constraint violation with a low probability. The resulting optimization problem, however, is in general non-convex \cite{ref:Prekopa-95, ref:Shapiro-09}.
 
 Computationally tractable approximations to the aforesaid optimization problems can be obtained through the scenario convex programs (SCPs) in which only finitely many uncertainty samples are considered. A natural question in this case is how many samples would be ``enough'' to provide a good solution. To answer this question, one may view the problem from two perspectives: feasibility and objective performance. The literature mainly focuses on the first perspective. In this direction, the authors in \cite{ref:CalCam-05,ref:CalCam-06} initialized a feasibility theory for CCP refined subsequently in \cite{ref:CamGar-08,ref:Cal-10}. They established an explicit probabilistic lower bound for the sample size to guarantee the feasibility of the SCP solutions from a chance-constrained perspective. By contrast, the issue of  performance bounds for both RCP and CCP via SCP has not been settled up to now. \cite{ref:Campi-11} provides a novel perspective in this direction that leads to optimal performance bounds for CCPs. However, it involves the problem of optimal constraint removal, which in general is computationally intractable. 
 
 The first contribution of this article is to address the SCP Performance issue from the objective viewpoint. The key element of our analysis relies on the concept of the worst-case violation inspired by the recent work \cite{ref:KanTak}. The authors of \cite{ref:KanTak} derived an upper bound of the worst-case violation for the SCPs where the uncertainty takes values in a finite dimensional Euclidean space. This result leads to a performance bound for a particular class of $\RCP$s where the uncertainly appears in the objective function, e.g., min-max optimization problems. Motivated by different applications such as control problems with saturation constraints \cite{ref:Campi-09}, fault detection and isolation in dynamical systems \cite{ref:Mohajerin-FDI-TAC}, and approximate dynamic programming \cite{ref:Duf-13}, in this article we first extend this result to infinite dimensional uncertainty spaces. In the sequel, we establish a theoretical bridge from the optimal values of SCP to the optimal values of both RCP and CCP. Along this direction, under mild assumptions on the constraint function (measurability with respect to the uncertainty and lower semicontinuity with respect to the decision variables), we shall also rigorously settle a measurability issue of the SCP optimizer, which to date has been addressed in the literature by an assumption, e.g. \cite{ref:CalCam-06, ref:CamGar-08}. Our second contribution is to extend these results to a class of non-convex programs that, in particular, allows for binary decision variables. In the context of mixed integer programs, the recent work \cite{ref:Calafiore-12} investigates the feasibility perspective of CCPs, which leads to a bound of the required number of scenarios with exponential growth rate in the number of integer variables, whereas our proposed bound scales linearly. 
 
 
 The layout of this article is as follows: In Section \ref{sec:problem} we formally introduce the optimization problems that will be addressed. Our results on probabilistic objective performance for both RCPs and CCPs based on SCPs are reported in Section \ref{sec:perf}. In Section \ref{sec:extension} we extend our results to a class of non-convex programs, including mixed-integer programs with binary variables. To illustrate the proposed methodology, in Section \ref{sec:simulation} the theoretical results are applied to two examples: a benchmark problem whose solution can be computed explicitly, and a fault detection and isolation study with an application to the security of power networks. We conclude in Section \ref{sec:conclusion} with a summary of our work and comment on possible subjects of further research. For better readability, some of the technical proofs and details are given in the appendices. 
 

\section*{Notation} 

 Let $\R_+$ denote the non-negative real numbers. Given a metric space $\D$, its Borel $\sigma$-algebra is denoted by $\borel(\D)$. Throughout this article, measurability always refers to Borel measurability. Given a probability space $\big(\D, \borel(\D), \PP\big)$, we denote the $N$-Cartesian product set of $\D$ by $\D^N$ and the respective product measure by $\PP^N$. An open ball in $\D$ with radius $r$ and center $v$ is denoted by $\ball{v}{r}\Let\{ d\in\D : \| d-v\| <r \}$. The symbol $\sat$ refers to the feasibility satisfaction, i.e., $x \sat \RCP$ means that $x$ is a feasible solution for the program $\RCP$. Similarly, $x \nsat \RCP$ implies that $x$ is not a feasible solution for the optimization problem $\RCP$.

\section{Problem Statement} \label{sec:problem}

 Let $\X \subset \R^n$ be a compact convex set and $c \in \R^n$ a constant vector. Let $\big(\D, \borel(\D), \PP\big)$ be a probability space where $\D$ is a metric space with the respective Borel $\sigma$-algebra $\borel(\D)$. Consider the measurable function $f:\X \times \D \ra \R$, which is convex in the first argument for each $d \in \D$, and bounded in the second argument for each $x \in \X$. We then consider the following optimization problems:
 	\begin{align} 
 	\label{rcp-ccp}
 		\RCP: \left\{ \begin{array}{ll}
 				\min\limits_{x}					& c\tr x   \\
 				\text{s.t. } 			& f(x,d)\leq 0, \quad \forall d \in \D\\
 										& x\in \X 
 		\end{array} \right. ,
 		\qquad
 		\CCP: \left\{ \begin{array}{ll}
 				\min\limits_{x}					& c\tr x   \\
 				\text{s.t. } 			& \PP [ f(x,d) \le 0 ] \ge 1 - \eps \\
 										& x \in \X 
 		\end{array}\right. ,
 	\end{align}
 where $\eps \in [0,1]$ is the constraint violation level for the chance-constrained program. We denote the optimal value of the program $\RCP$ (resp.\ $\CCP$) by $\Jrcp$ (resp.\ $\Jccp$). Suppose $(d_i)_{i=1}^N$ are $N$ independent and identically distributed (i.i.d.) samples drawn according to the probability measure $\PP$. The centerpiece of this study is the scenario program
 	\begin{align}
 	\label{scp}
 		\SCP: \left\{ \begin{array}{ll}
 				\min\limits_{x}					& c\tr x   \\
 				\text{s.t. } 			& f(x,d_i) \leq 0, \quad \forall i \in \{1,\cdots,N\}\\
 										& x\in \X 
 		\end{array} \right. ,
 	\end{align}
 where the optimal solution and optimal value of $\SCP$ are denoted, respectively, by $x^\star_N$ and $\Jscp$. Notice that $\SCP$ is naturally random as it depends on the random samples $(d_i)_{i=1}^N$. 
 
 We assume throughout our subsequent analysis that the following measurability assumption holds, though we shall show in Subsection \ref{subsec:meas} how one may rigorously address this issue without any assumption for a large class of optimization programs (not necessarily convex). 
 	\begin{Ass}
 		\label{a:meas}
 		The $\SCP$ optimizer generates a Borel measurable mapping from $\big(\D^N, \borel(\D^N)\big)$ to $\big(\X, \borel(\X)\big)$ that associates each $(d_i)_{i=1}^N$ with a unique $x^\star_N$.
 	\end{Ass}
 
 The optimization program $\SCP$ in \eqref{scp} is convex and hence tractable even for cases where the problems \eqref{rcp-ccp} are NP-hard. Motivated by this, a natural question is whether there exist theoretical links from $\SCP$ to $\RCP$ and $\CCP$. As mentioned in the introduction, this question can be addressed from two different perspectives: feasibility and objective performance. From the feasibility perspective, we recall the explicit bound of \cite{ref:CamGar-08} which measures the finite sample behavior of $\SCP$:
 
 
 
 	\begin{Thm}[$\CCP$ Feasibility]
 		\label{thm:feas}
 		Let $\beta\in [0,1]$ and $N \ge N(\eps,\beta)$ where 
 		\begin{equation}
 		\label{N}
 			N(\eps,\beta) \Let \min \bigg\{ N\in \N ~\Big|~ \sum_{i=0}^{n-1}  {N \choose i} \eps^{i}(1-\eps)^{N-i}\leq\beta \bigg\}.
 		\end{equation}
 		Then, the optimizer of $\SCP$ is a feasible solution of $\CCP$ with probability at least $1-\beta$. 
  	\end{Thm}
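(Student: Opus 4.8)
The plan is to recast the CCP-feasibility of the scenario optimizer as a statement about its \emph{violation probability} and then control the tail of that probability via an exchangeability argument over candidate support sets. First I would introduce the violation probability $V(x)\Let\PP[f(x,d)>0]$ and note that, directly from the definition of $\CCP$, the event $x^\star_N\sat\CCP$ coincides with $\{V(x^\star_N)\le\eps\}$. Hence it suffices to prove $\PP^N[V(x^\star_N)>\eps]\le\beta$, where $\beta=\sum_{i=0}^{n-1}\binom{N}{i}\eps^{i}(1-\eps)^{N-i}$ is precisely the binomial tail defining $N(\eps,\beta)$ in \eqref{N}.

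The structural engine of the argument is the notion of a \emph{support constraint}: the sample $d_i$ is a support constraint if deleting the constraint $f(\,\cdot\,,d_i)\le0$ strictly lowers the optimal value of $\SCP$. The key fact, a Helly-type result for convex programs in $\R^n$, is that $\SCP$ admits at most $n$ support constraints. Consequently $x^\star_N$ is determined by an $n$-element index set, i.e.\ it coincides with the optimizer $x^\star_I$ of the subprogram that retains only the samples indexed by some $I\subseteq\{1,\dots,N\}$ with $|I|=n$.

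With this in hand I would set up the combinatorial decomposition. The support-constraint fact yields the inclusion $\{V(x^\star_N)>\eps\}\subseteq\bigcup_{|I|=n}\{x^\star_N=x^\star_I,\ V(x^\star_I)>\eps\}$, and since the $d_i$ are i.i.d.\ all $\binom{N}{n}$ events on the right are equiprobable, so it is enough to estimate a single one (say $I_0=\{1,\dots,n\}$) and multiply by $\binom{N}{n}$. Conditioning on $(d_1,\dots,d_n)$ fixes $x^\star_{I_0}$ and leaves the remaining $N-n$ samples independent; the event $x^\star_N=x^\star_{I_0}$ then forces each of them to satisfy $f(x^\star_{I_0},\cdot)\le0$, each with conditional probability $1-V(x^\star_{I_0})$, jointly contributing a factor $\big(1-V(x^\star_{I_0})\big)^{N-n}$. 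Integrating over the law $G$ of $V(x^\star_{I_0})$ gives $\PP^N[V(x^\star_N)>\eps]\le\binom{N}{n}\int_\eps^1(1-v)^{N-n}\,\diff G(v)$, and determining (or dominating) $G$ and invoking the classical incomplete-Beta/binomial identity collapses the right-hand side to the binomial tail; equivalently, $V(x^\star_N)$ is stochastically dominated by a $\mathrm{Beta}(n,N-n+1)$ variable, with equality in the fully supported case.

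I expect two places to be the main obstacles. The first is the support-constraint lemma together with the \emph{degenerate} case in which $\SCP$ has strictly fewer than $n$ support constraints, which requires either a tie-breaking/non-degeneracy convention or a perturbation-and-limit argument to preserve the $\binom{N}{n}$ bookkeeping. The second is the final step pinning down the unknown distribution $G$ so as to recover exactly the Beta law rather than a crude bound; here the precise combinatorial identity is essential, and it rests on Assumption~\ref{a:meas}, which guarantees that $x^\star_N$ and each $x^\star_I$ are well-defined measurable maps, so that all the conditional probabilities manipulated above are meaningful.
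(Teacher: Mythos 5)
The paper does not actually prove this theorem: it is recalled verbatim from the literature (the text introduces it with ``we recall the explicit bound of \cite{ref:CamGar-08}''), so there is no in-paper proof to compare against. What you have written is a roadmap of precisely that cited Campi--Garatti argument: violation probability, the Helly-type fact that a convex program in $\R^n$ has at most $n$ support constraints, the exchangeable decomposition over $\binom{N}{n}$ candidate support sets, the conditional factor $(1-V)^{N-n}$, and the Beta$(n,N-n+1)$ law whose tail is the binomial sum in \eqref{N}. One point in your favour that is specific to this paper: the support-constraint lemma lives entirely in the decision space $\R^n$, so nothing in the argument is affected by $\D$ being a general (possibly infinite-dimensional) metric space, and the measurability of all the events you condition on is exactly what Assumption \ref{a:meas} (later discharged by Proposition \ref{prop:meas}) supplies. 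You also implicitly need uniqueness of $x^\star_N$ and of each $x^\star_I$ (a tie-break rule), which the same assumption provides.

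As a standalone proof, however, the two steps you flag as ``obstacles'' are genuinely where all the work is, and the route you sketch for the second one does not close as written. The union bound $\{V(x^\star_N)>\eps\}\subseteq\bigcup_{|I|=n}\{x^\star_N=x^\star_I,\ V(x^\star_I)>\eps\}$ followed by the crude estimate $\int_\eps^1(1-v)^{N-n}\,\diff G(v)\le(1-\eps)^{N-n}$ only yields the older Calafiore--Campi bound $\binom{N}{n}(1-\eps)^{N-n}$, which is strictly weaker than the tail in \eqref{N}. To get the exact binomial tail you must (i) replace the union by the a.s.\ partition into events ``$I$ is \emph{the} support set'' (which requires the non-degeneracy/full-support analysis, with a perturbation or domination argument for degenerate instances), and (ii) actually establish that $G$, the law of $V(x^\star_{I_0})$ for the $n$-sample subproblem, has distribution function $v^n$ in the fully supported case --- this does not follow from ``determining or dominating $G$'' in the abstract but from the consistency of the family of identities across all $N\ge n$ that Campi and Garatti set up. So: accurate reconstruction of the architecture of the cited proof, but the decisive combinatorial identity is asserted rather than derived, and the inequality you write down en route is the one that gives the wrong (weaker) constant.
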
 
 
 With ``$\sat$'' notation, the assertion of Theorem \ref{thm:feas} is alternatively stated by $\PP^N\big[ x^\star_N \sat \CCP \big] \ge 1- \beta$, where $\PP^N$ stands for the $N$-fold product probability measure.\footnote{Note that $\PP$ is the probability measure on $\borel(\D)$; for simplicity we slightly abuse the notation, and will be doing so hereinafter. Strictly speaking, one has to define a new probability measure, say $\mathds{Q}$, which is the induced measure on $\borel(\X)$ via the mapping introduced in Assumption \ref{a:meas}.}
 	
 To the best of our knowledge, there is no clear connection between the feasibility of $\RCP$ and the solution of $\SCP$. Furthermore, in Subsection \ref{subsec:feas} we provide an example to challenge the possibility of such a connection. The focus of our study is on the second perspective to seek a (probabilistic) bound for the optimal values $\Jrcp$ and $\Jccp$ in terms of $J^\star_{N}$. 
 

\section{Probabilistic Objective Performance} \label{sec:perf}

 \subsection{Confidence interval for the objective functions}\label{subsec:perf}
 The following definition inspired by the recent work \cite{ref:KanTak} is the key object for our analysis. 
 
 	\begin{Def}
 		\label{def:tail}
 		The \emph{tail probability of the worst-case violation} is the function $p:\X \times \R_+ \ra [0,1]$ defined as
 		\begin{align*}
 			p(x,\delta) \Let \PP\Big[ \sup_{v \in \D} f(x,v) - \delta < f(x,d) \Big].
 		\end{align*}
 		We call $h: [0,1] \ra \R_+$ a \emph{uniform level-set bound} (ULB) of $p$ if for all $\eps \in [0,1]$
 		\begin{align*}
 				h(\eps) \ge \sup \Big \{ \delta \in \R_+ ~\big|~ \inf_{x \in \X} p(x,\delta) \le \eps \Big \}.
 		\end{align*}
 	\end{Def}
 			\begin{figure}[t!]
 				\centering
 				\subfigure[$p(x,\delta)= \PP {[A\cup B]} $]{\label{2D_tail_prob}\includegraphics[scale = 0.6]{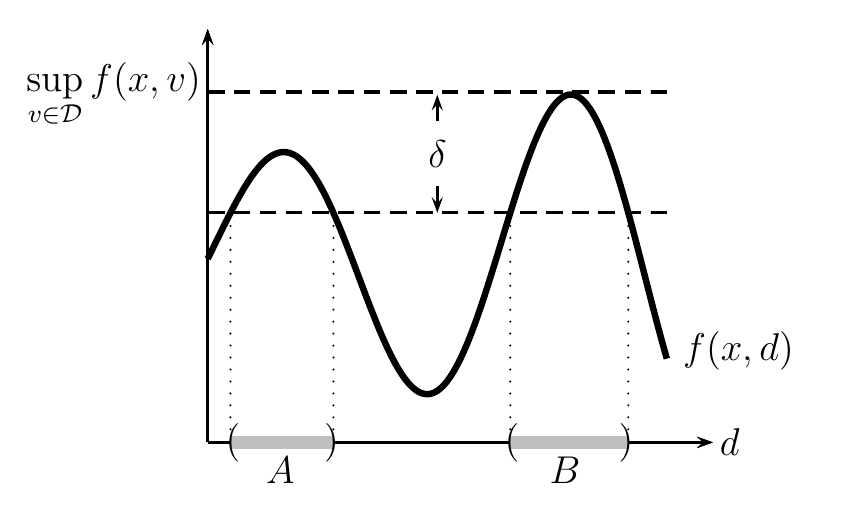}} \quad
 				\subfigure[Tail probability of the worst-case violation]{\label{fig:3D_plot}\includegraphics[scale = 0.6]{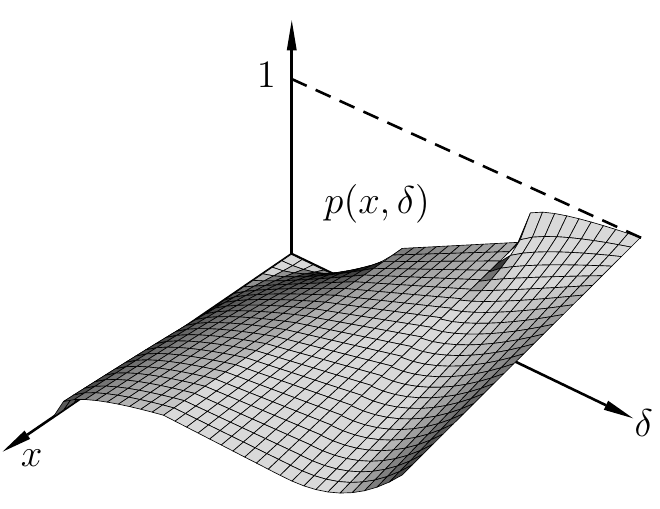}}	\qquad
 				\subfigure[Uniform level set bound]{\label{fig:ULB_plot}\includegraphics[scale = 0.6]{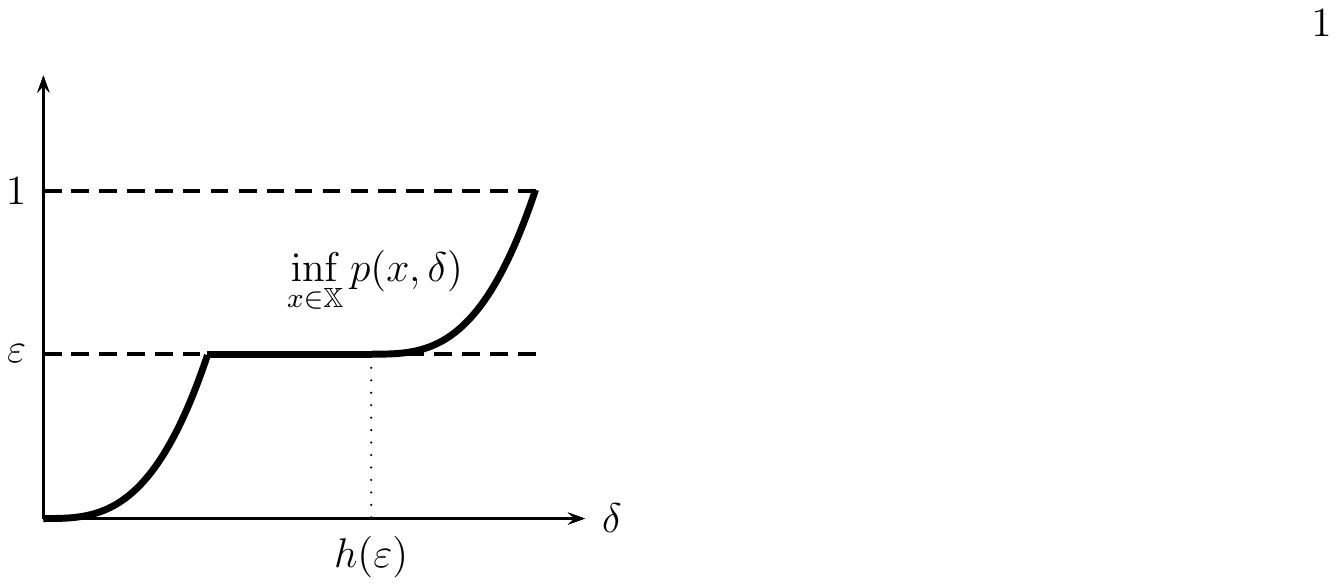}}			
 				\caption{Pictorial representation of Definition \ref{def:tail} }
 				\label{fig:ULB}
 		\end{figure}
 	A pictorial representation of Definition \ref{def:tail} is given in Figure \ref{fig:ULB}. Note that from a statistical perspective the ULB function may be alternatively viewed as an upper bound for the \emph{quantile} function of the $\R$-valued random variable $d \mapsto \big(\sup_{v \in \D}f(x,v)-f(x,d)\big)$ uniformly in decision variable $x \in \X$ (cf. \cite[Section 5.2]{ref:Shao-03}). Proposition \ref{prop:h} at the end of this subsection provides sufficient conditions under which a candidate ULB can be constructed. If the uncertainty set $\D$ is a specific compact subset of a Euclidean space, namely a norm-constrained or more generally a star-shaped set, the authors in \cite{ref:KanTak} provide a constructive approach to obtain an admissible ULB. 
 	
 	
 	Consider the relaxed version of the program $\RCP$ for $\gamma > 0$:
 	\begin{align}
 		\label{rcp-gamma}
 		\RCP_\gamma: \left\{ \begin{array}{ll}
 						\min\limits_{x}					& c\tr x   \\
 						\text{s.t. } 			& f(x,d)\leq \gamma, \quad \forall d \in \D\\
 												& x\in \X 
 		\end{array} \right. ,
 	\end{align}
 	with the optimal value $\Jg{\gamma}$. 
 	
 	\begin{Lem}
 		\label{lem:feas}
 		Let $h:[0,1] \ra \R_+$ be a ULB. Then, 
 			$$ x \sat \CCP \quad \Lra \quad  x \sat \RCP_{h(\eps)}$$
 		that is, the feasible set of the program $\CCP$ with constraint violation level $\eps$ is a subset of the feasible set of the relaxed program $\RCP_\gamma$ with $\gamma \Let h(\eps)$.
 	\end{Lem}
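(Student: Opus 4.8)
The plan is to show directly that any $x$ feasible for $\CCP$ obeys the worst-case bound $\sup_{v \in \D} f(x,v) \le h(\eps)$, which is precisely the feasibility condition defining $\RCP_{h(\eps)}$. The bridge between the chance constraint and the worst-case violation is to evaluate the tail-probability function $p(x,\cdot)$ of Definition \ref{def:tail} at the single, carefully chosen point $\delta = \sup_{v \in \D} f(x,v)$.

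First I would fix an arbitrary $x \sat \CCP$ and abbreviate $M(x) \Let \sup_{v \in \D} f(x,v)$; this supremum is finite because $f$ is bounded in its second argument, so $M(x)$ is well defined. Feasibility of $x$ for $\CCP$ reads $\PP[f(x,d) \le 0] \ge 1 - \eps$, i.e.\ $\PP[f(x,d) > 0] \le \eps$.

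The key step is to observe that substituting $\delta = M(x)$ collapses the tail probability onto exactly this chance constraint:
$$ p\big(x, M(x)\big) = \PP\big[ M(x) - M(x) < f(x,d) \big] = \PP\big[ f(x,d) > 0 \big] \le \eps. $$
When $M(x) > 0$, so that $M(x) \in \R_+$, the chain $\inf_{x' \in \X} p(x', M(x)) \le p(x, M(x)) \le \eps$ places $M(x)$ inside the set $\{ \delta \in \R_+ : \inf_{x' \in \X} p(x', \delta) \le \eps \}$, and the defining inequality of a ULB then gives $M(x) \le h(\eps)$. The complementary case $M(x) \le 0$ is immediate, since $f(x,d) \le M(x) \le 0 \le h(\eps)$ for every $d$, using only $h(\eps) \ge 0$. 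In either case $f(x,d) \le h(\eps)$ for all $d \in \D$, that is $x \sat \RCP_{h(\eps)}$.

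I do not anticipate a serious obstacle here: once the right value of $\delta$ is identified the argument is essentially a one-line substitution. The only points deserving care are (i) that $M(x)$ is finite, so the substitution is legitimate, which follows from boundedness of $f(x,\cdot)$; and (ii) that $M(x) \in \R_+$ before the ULB inequality is invoked, which is exactly why the sign of $M(x)$ is handled in two cases. Measurability of the events $\{f(x,d) > 0\}$ and $\{f(x,d) > M(x) - \delta\}$ is inherited from the standing measurability of $f$, so all probabilities above are well defined.
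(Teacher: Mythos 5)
Your proof is correct and follows essentially the same route as the paper's: evaluate the tail probability $p$ at $\delta = \sup_{v\in\D} f(x,v)$, observe it collapses to the chance constraint $\PP[f(x,d)>0]\le\eps$, and invoke the ULB inequality to bound the worst-case value by $h(\eps)$. Your explicit treatment of the case $\sup_{v\in\D} f(x,v)\le 0$ (needed because the ULB set ranges only over $\delta\in\R_+$) is a small point of care that the paper's one-line proof leaves implicit.
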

 	\begin{proof}
 		See Appendix \ref{app-A}. 
 	\end{proof}

 	\begin{Ass}[Slater Point]
 		\label{a:duality}
 		There exists an ${x}_0\in\X$ such that $\sup\limits_{d \in \D}f({x}_0,d)<0$.
 	\end{Ass}

 	Under Assumption \ref{a:duality}, we define the constant
 		\begin{align}
 		\label{lip}
 			\lip \Let \frac{\min_{x \in \X}c\tr x - c\tr x_0}{\sup_{d \in \D}f({x}_0,d)}.
 		\end{align}
 	The following lemma is a classical result in perturbation theory of convex programs, which is a significant ingredient for the first result of this article. 
 	\begin{Lem}
 		\label{lem:lip}
 		Consider the relaxed program $\RCP_\gamma$ and its optimal value $\Jg{\gamma}$ as introduced in \eqref{rcp-gamma}. Under Assumption \ref{a:duality}, the mapping $\R_+ \ni \gamma \mapsto \Jg{\gamma} \in \R$ is Lipschitz continuous with constant bounded by $\lip$ in \eqref{lip}, i.e., for all $\gamma_2 \ge \gamma_1\ge 0$ we have
 			$$ 0 \le  \Jg{\gamma_1} - \Jg{\gamma_2} \le \lip(\gamma_2-\gamma_1).$$
 	\end{Lem}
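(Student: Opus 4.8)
The plan is to prove the two inequalities separately. The left inequality $0 \le \Jg{\gamma_1} - \Jg{\gamma_2}$ is pure monotonicity of the value function: since $\gamma_1 \le \gamma_2$, every $x$ with $f(x,d)\le\gamma_1$ for all $d$ also satisfies $f(x,d)\le\gamma_2$, so the feasible set of $\RCP_{\gamma_1}$ is contained in that of $\RCP_{\gamma_2}$, and minimizing the same objective over a smaller set can only increase the value. For later use I would also record that $\RCP_\gamma$ is solvable for every $\gamma\ge 0$: the Slater point $x_0$ is feasible because $\sup_{d\in\D} f(x_0,d) < 0 \le \gamma$, so the feasible set is nonempty and, as a closed subset of the compact set $\X$, compact; hence the linear objective attains its minimum at some $x_2^\opt$ (one may equally work with near-minimizers and pass to the limit). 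This point $x_2^\opt$ is what I would perturb.

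For the right inequality the idea is to push $x_2^\opt$ toward the Slater point just far enough to regain feasibility at the tighter level $\gamma_1$. Writing $s_0 \Let \sup_{d\in\D} f(x_0,d) < 0$ and using convexity of $f(\cdot,d)$, the convex combination $x_\lambda \Let (1-\lambda)x_2^\opt + \lambda x_0 \in \X$ satisfies, for every $d\in\D$,
$$ f(x_\lambda,d) \le (1-\lambda)f(x_2^\opt,d) + \lambda f(x_0,d) \le (1-\lambda)\gamma_2 + \lambda s_0. $$
Setting the right-hand side equal to $\gamma_1$ and solving gives $\lambda^\opt \Let (\gamma_2-\gamma_1)/(\gamma_2-s_0)$, which lies in $[0,1]$ because $0\le\gamma_1\le\gamma_2$ and $s_0<0$. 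Thus $x_{\lambda^\opt}\sat\RCP_{\gamma_1}$, and evaluating the objective yields
$$ \Jg{\gamma_1} \le c\tr x_{\lambda^\opt} = \Jg{\gamma_2} + \lambda^\opt\big(c\tr x_0 - \Jg{\gamma_2}\big). $$

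It then remains to convert the $\gamma_2$-dependent coefficient into the stated constant $\lip$. Using $\Jg{\gamma_2}\ge \min_{x\in\X}c\tr x$ together with the identity $c\tr x_0 - \min_{x\in\X}c\tr x = \lip\,(-s_0)$, which is just the definition \eqref{lip} rearranged, I would obtain
$$ \Jg{\gamma_1} - \Jg{\gamma_2} \le \lambda^\opt\big(c\tr x_0 - \min_{x\in\X}c\tr x\big) = \frac{-s_0}{\gamma_2-s_0}\,\lip\,(\gamma_2-\gamma_1), $$
and since $\gamma_2\ge 0$ and $-s_0>0$ force $(-s_0)/(\gamma_2-s_0)\in(0,1]$, the claimed bound $\Jg{\gamma_1}-\Jg{\gamma_2}\le\lip(\gamma_2-\gamma_1)$ follows. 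I expect the delicate points to be bookkeeping rather than conceptual: verifying the signs so that $\lip\ge 0$ (note $\min_{x\in\X}c\tr x - c\tr x_0\le 0$ since $x_0\in\X$, while $s_0<0$) and $\lambda^\opt\in[0,1]$, and recognizing that the spare factor $(-s_0)/(\gamma_2-s_0)\le 1$ is exactly what collapses the estimate to the clean Lipschitz constant. Conceptually this is the classical sensitivity statement that the slope of the convex value function $\gamma\mapsto\Jg{\gamma}$ is controlled by the optimal dual multiplier, with the Slater margin $|s_0|$ bounding that multiplier; the convex-combination argument above is simply the self-contained primal version of that fact.
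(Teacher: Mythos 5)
Your proof is correct, but it takes a genuinely different route from the paper. The paper's proof is duality-based and consists of two citations: first, the classical fact that under strong duality the perturbation function $\gamma \mapsto \Jg{\gamma}$ is Lipschitz with constant $\|\lambda^\star\|_1$, where $\lambda^\star$ is a dual optimizer of $\RCP$; second, the Nedi\'c--Ozdaglar bound showing that the Slater point forces $\|\lambda^\star\|_1 \le \lip$. Your argument is instead a self-contained primal one: slide the (near-)optimizer of $\RCP_{\gamma_2}$ along the segment toward the Slater point $x_0$ until feasibility at level $\gamma_1$ is restored, and read off the objective increase. Both are sound; your convex-combination computation, the verification that $\lambda^\opt = (\gamma_2-\gamma_1)/(\gamma_2 - s_0) \in [0,1]$, the replacement of $\Jg{\gamma_2}$ by $\min_{x\in\X} c\tr x$, and the final absorption of the factor $(-s_0)/(\gamma_2 - s_0) \le 1$ all check out, and your use of near-minimizers correctly sidesteps the (unaddressed) attainment issue. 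What your route buys is elementary self-containment --- no strong duality, no existence of dual optimizers --- plus a marginally sharper, $\gamma_2$-dependent constant $\lip \cdot (-s_0)/(\gamma_2-s_0)$. What the paper's route buys is the explicit dual-multiplier interpretation $\|\lambda^\star\|_1 \le \lip$, which is not merely decorative: it is reused in the a posteriori bound of Theorem \ref{thm:perf-ccp}, where the computed dual optimizer $\lambda^\star_N$ of $\SCP$ replaces $\lip$ in the interval $I_N(\eps)$. If you adopted your primal proof in the paper, that later step would need a separate justification.
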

 	
 	\begin{proof}
		See Appendix \ref{app-A}.
 	\end{proof}
 	
  	Assumption \ref{a:duality} requires the existence of a strictly feasible solution $x_0$ which, in general, may not exist. However, in applications where a ``risk-free" decision is available such an assumption is not really restrictive; the portfolio selection problem is an example of this kind \cite{ref:PagCam-12}. In addition, for the class of min-max problems, as a particular case of the program $\RCP$, it is not difficult to see that Assumption \ref{a:duality} holds; see the following remark for more details and Section \ref{ex:2} for an application to the problem of fault detection and isolation.
  	
  	\begin{Rem}[$\lip$ for Min-Max Problems] \label{rem:lip} 
  	In min-max problems, one may inspect that there always exists a Slater point (in the sense of Assumption \ref{a:duality}) with the corresponding constant $\lip$ arbitrarily close to $1$. In fact, it is straightforward to observe that for min-max problems $\Jg{\gamma} = \Jrcp - \gamma$, which readily implies that the Lipschitz constant of Lemma \ref{lem:lip} is $1$. 
  	\end{Rem}
 	
 	The following results are the main contributions of the first part of the article.
 	
 	\begin{Thm}[$\RCP$ Confidence Interval]
 		\label{thm:perf-rcp}
 		Consider the programs $\RCP$ and $\SCP$ in \eqref{rcp-ccp} and \eqref{scp} with the associated optimal values $\Jrcp$ and $\Jscp$, respectively. Suppose Assumption \ref{a:duality} holds and $\lip$ is the constant in \eqref{lip}. Given a ULB $h$ and $\eps, \beta$ in $[0,1]$, for all $N \ge N(\eps,\beta)$ as defined in \eqref{N}, we have
 			\begin{align}
 				\label{J_rcp} 
 				\PP^N \Big[ \Jrcp - \Jscp \in \big[0, I(\eps)\big]\Big] & \ge 1-\beta,
 			\end{align}
 		where 
 			\begin{align}
 				\label{I}
 				I(\eps) \Let \min\Big\{ \lip h(\eps),~ \max_{x \in \X} c\tr x - \min_{x\in\X}c\tr x \Big\}.
 			\end{align}
 	\end{Thm}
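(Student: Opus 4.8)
The plan is to split the two-sided inclusion $\Jrcp-\Jscp\in[0,I(\eps)]$ into a deterministic left endpoint, a deterministic coarse upper bound, and a probabilistic refinement, and then to chain the three preparatory results---Theorem~\ref{thm:feas}, Lemma~\ref{lem:feas}, and Lemma~\ref{lem:lip}---into the refinement. Before anything else I would note that the Slater point $x_0$ of Assumption~\ref{a:duality} satisfies $f(x_0,d_i)\le 0$ for every drawn sample, so $x_0$ is feasible for both $\SCP$ and $\RCP$; together with compactness of $\X$ this makes $\Jscp$ and $\Jrcp$ finite, and Assumption~\ref{a:meas} makes $\Jscp=c\tr x^\star_N$ a measurable function of $(d_i)_{i=1}^N$, so the event in \eqref{J_rcp} is a genuine $\PP^N$-event.

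For the left endpoint I would show $\Jrcp-\Jscp\ge 0$ surely. Any $x$ feasible for $\RCP$ satisfies $f(x,d)\le 0$ for all $d\in\D$, hence in particular at the samples $d_1,\dots,d_N$; thus the feasible set of $\RCP$ is contained in that of $\SCP$, and minimising the common objective $c\tr x$ over the larger set can only lower the optimum, so $\Jscp\le\Jrcp$ regardless of the realization.

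The substance of the proof is the probabilistic bound $\Jrcp-\Jscp\le\lip h(\eps)$. I would run the implication chain
\[ x^\star_N \sat \CCP \ \Lra\ x^\star_N \sat \RCP_{h(\eps)} \ \Lra\ \Jscp = c\tr x^\star_N \ge \Jg{h(\eps)}, \]
where the first step is Lemma~\ref{lem:feas} (with $\gamma=h(\eps)$) and the second is mere optimality, since a point feasible for $\RCP_{h(\eps)}$ cannot beat that program's minimum. Because $\RCP_0$ is exactly $\RCP$, Lemma~\ref{lem:lip} applied with $\gamma_1=0$ and $\gamma_2=h(\eps)$ gives $\Jrcp-\Jg{h(\eps)}\le\lip h(\eps)$, and combining this with the displayed inequality yields $\Jrcp-\Jscp\le\Jrcp-\Jg{h(\eps)}\le\lip h(\eps)$ on the event $\{x^\star_N\sat\CCP\}$. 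By Theorem~\ref{thm:feas}, this event carries probability at least $1-\beta$ as soon as $N\ge N(\eps,\beta)$, which is precisely the standing hypothesis.

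Finally I would incorporate the trivial deterministic bound $\Jrcp-\Jscp\le\max_{x\in\X}c\tr x-\min_{x\in\X}c\tr x$, which holds since $\Jrcp\le\max_{x\in\X}c\tr x$ and $\Jscp\ge\min_{x\in\X}c\tr x$; intersecting it with the $(1-\beta)$-event from the previous paragraph upgrades $\lip h(\eps)$ to the minimum $I(\eps)$ in \eqref{I}, and combining with the sure lower bound delivers $\PP^N[\Jrcp-\Jscp\in[0,I(\eps)]]\ge 1-\beta$. There is no serious analytic obstacle here---the three lemmas do all the heavy lifting---so the only care required is the correct stacking of the reductions (from $\SCP$ to $\CCP$ by feasibility, from $\CCP$ to $\RCP_{h(\eps)}$ by the ULB, and from $\RCP_{h(\eps)}$ to $\RCP$ by Lipschitz continuity) and the confirmation, via Assumption~\ref{a:meas}, that the target set is $\PP^N$-measurable.
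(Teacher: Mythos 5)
Your proposal is correct and follows essentially the same route as the paper's proof: Theorem~\ref{thm:feas} gives $\PP^N[x^\star_N\sat\CCP]\ge 1-\beta$, Lemma~\ref{lem:feas} converts this to feasibility for $\RCP_{h(\eps)}$ (hence $\Jscp\ge\Jg{h(\eps)}$), Lemma~\ref{lem:lip} with $\gamma_1=0$, $\gamma_2=h(\eps)$ closes the gap to $\Jrcp$, and the remaining endpoints of the interval are handled by the same deterministic inclusion and range arguments the paper uses. Your added remarks on measurability and finiteness are harmless elaborations of the standing assumptions rather than a different approach.
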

 
 	\begin{proof}
 		Due to the definition of the optimization problems $\RCP$ and $\SCP$, the second term of the confidence interval \eqref{I} is a trivial bound. It then suffices to establish the bound for the first term of \eqref{I}. By Theorem \ref{thm:feas}, we know $\PP^N\big[x^\star_N \sat \CCP\big] \ge 1-\beta$ that in view of Lemma \ref{lem:feas} implies
 		\begin{align*}
 		\PP^N\big[x^\star_N \sat \RCP_{h(\eps)}\big] \ge 1-\beta ~\Lra~ \PP^N\big[\Jg{h(\eps)} \le J^\star_N \big] \ge 1-\beta,
 		\end{align*}
 		where $h$ is the ULB, and $\Jg{h(\eps)}$ is the optimal value of the relaxed robust program \eqref{rcp-gamma} with $\gamma \Let h(\eps)$. By virtue of Lemma \ref{lem:lip}, we have $\Jrcp \le \Jg{_{h(\eps)}} + \lip h(\eps)$, that in conjunction with the above implication leads to
 		\begin{align*}
 			\PP^N \Big[ \Jrcp \le J^\star_N + \lip h(\eps) \Big] \ge 1-\beta.
 		\end{align*}
 		 Since the program $\SCP$ is just a restricted version of $\RCP$, it is trivial that $\Jscp \le \Jrcp$ pointwise on $\Omega^N$, which concludes \eqref{J_rcp}.
 		\end{proof} 
 	
 	In accordance with the optimization problem $\CCP$, the following theorem provides similar performance assessment but in both a priori and a posteriori fashions.

 	\begin{Thm}[$\CCP$ Confidence Interval]
 		\label{thm:perf-ccp}
 		Consider the programs $\CCP$ and $\SCP$ in \eqref{rcp-ccp} and \eqref{scp} with the associated optimal values $\Jccp$ and $\Jscp$, respectively. Suppose Assumption \ref{a:duality} holds and $\lip$ is the constant in \eqref{lip}. Let $h$ be a ULB and $\lambda^\star_N$ the dual optimizer of $\SCP$. Given $\beta$ in $[0,1]$, for all $N \ge N(\eps,\beta)$ defined in \eqref{N}, we have 
 			\begin{subequations}
 			\label{J-ccp}
 				\begin{align}
 				\label{J_ccp-prior} \text{A Priori Assessment:}& \qquad 
 				\PP^N \Big[ \Jccp - \Jscp \in \big[-I(\eps), 0\big] \Big] ~\ge 1-\beta,  \\
 				\label{J_ccp-posterior} \text{A Posteriori Assessment:}& \qquad 
 				\PP^N \Big[ \Jccp - \Jscp \in \big[-I_N(\eps), 0\big] \Big]  \ge 1-\beta,
 				\end{align}
 			\end{subequations}
 		where the a priori interval $I(\eps)$ is defined as in \eqref{I}, and the a posteriori interval is 
 			\begin{align}
 			\label{I_N}
 				I_N(\eps) \Let \min\Big\{ \big\|\lambda^\star_N \big\|_1 h(\eps),~ \max_{x \in \X} c\tr x - \min_{x\in\X}c\tr x \Big\}.
 			\end{align}
 	\end{Thm}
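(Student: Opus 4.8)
The plan is to treat both assessments simultaneously, observing that each confidence interval splits into a common \emph{probabilistic} upper bound $\Jccp - \Jscp \le 0$ and a \emph{deterministic} lower bound $\Jccp - \Jscp \ge -I(\eps)$ (resp.\ $\ge -I_N(\eps)$). The a priori and a posteriori statements share the same upper bound and differ only in the constant entering the lower bound: the a priori version uses the global constant $\lip$ of Lemma~\ref{lem:lip}, whereas the a posteriori version uses the sample-dependent quantity $\|\lambda^\star_N\|_1$ read off from the dual solution of $\SCP$. I would therefore prove the shared upper bound once, then supply two deterministic lower bounds.

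For the upper bound I would invoke Theorem~\ref{thm:feas}: for $N \ge N(\eps,\beta)$ one has $\PP^N[x^\star_N \sat \CCP] \ge 1-\beta$. On this event $x^\star_N$ is $\CCP$-feasible, so $\Jccp \le c\tr x^\star_N = \Jscp$; hence $\PP^N[\Jccp \le \Jscp] \ge 1-\beta$. This single estimate serves both \eqref{J_ccp-prior} and \eqref{J_ccp-posterior}, and it is the only place randomness enters.

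The a priori lower bound is obtained deterministically, paralleling the proof of Theorem~\ref{thm:perf-rcp}. By Lemma~\ref{lem:feas} the feasible set of $\CCP$ is contained in that of $\RCP_{h(\eps)}$, so $\Jccp \ge \Jg{h(\eps)}$. Applying Lemma~\ref{lem:lip} with $\gamma_1 = 0$ and $\gamma_2 = h(\eps)$ gives $\Jg{h(\eps)} \ge \Jrcp - \lip h(\eps)$, and since $\SCP$ is a restriction of $\RCP$ one has $\Jrcp \ge \Jscp$ pointwise. Chaining these and intersecting with the trivial estimate $\Jccp \ge \min_{x\in\X} c\tr x \ge \Jscp - \big(\max_{x\in\X}c\tr x - \min_{x\in\X}c\tr x\big)$ yields $\Jccp \ge \Jscp - I(\eps)$ everywhere, since the difference exceeds the negative of each term in the $\min$. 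Combined with the upper bound this proves \eqref{J_ccp-prior}.

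The a posteriori refinement is the crux, and I expect the sensitivity analysis of the scenario program to be the main obstacle. The Slater point of Assumption~\ref{a:duality} satisfies $f(x_0,d_i) \le \sup_{d}f(x_0,d) < 0$ for every sample, so $x_0$ is strictly feasible for $\SCP$ (keeping the abstract constraint $x \in \X$ undualized); strong duality thus holds and the dual optimizer $\lambda^\star_N \ge 0$ exists. Let $J^\star_N(\gamma)$ denote the optimal value of the $\gamma$-relaxed scenario program $\min\{c\tr x : f(x,d_i)\le \gamma,\ x\in\X\}$, a convex, non-increasing function of $\gamma$ with $J^\star_N(0)=\Jscp$. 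The key step is the subgradient inequality $J^\star_N(\gamma) \ge \Jscp - \|\lambda^\star_N\|_1\,\gamma$, which holds because $-\lambda^\star_N$ is a subgradient of this optimal-value function at the origin. Since the $\gamma$-relaxed scenario program is a restriction of $\RCP_\gamma$ we have $\Jg{h(\eps)} \ge J^\star_N(h(\eps))$, and combining with $\Jccp \ge \Jg{h(\eps)}$ gives $\Jccp \ge \Jscp - \|\lambda^\star_N\|_1 h(\eps)$ pointwise; intersecting with the same trivial bound produces $\Jccp \ge \Jscp - I_N(\eps)$, which together with the upper bound establishes \eqref{J_ccp-posterior}. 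As a consistency check, evaluating the dual objective at $x_0$ shows $\|\lambda^\star_N\|_1 \le \lip$, so $I_N(\eps) \le I(\eps)$ and the a posteriori interval indeed tightens the a priori one. Beyond the sensitivity inequality, the only remaining care is to guarantee measurability of $\lambda^\star_N$ so that \eqref{J_ccp-posterior} is a well-posed probabilistic statement.
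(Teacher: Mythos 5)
Your proof is correct and follows essentially the same route as the paper's: the only probabilistic ingredient is Theorem \ref{thm:feas}, giving $\Jccp \le \Jscp$ on a $(1-\beta)$-event, while the lower bounds come pointwise from $\Jccp \ge \Jg{h(\eps)}$ (Lemma \ref{lem:feas}) combined with Lemma \ref{lem:lip} applied to $\RCP_\gamma$ for the a priori interval and, via the dual/subgradient sensitivity bound $\Jscp - \|\lambda^\star_N\|_1 h(\eps) \le J^\star_{N,h(\eps)} \le \Jg{h(\eps)}$, to the relaxed scenario program for the a posteriori interval. Your extra observation that $\|\lambda^\star_N\|_1 \le \lip$ (so $I_N(\eps) \le I(\eps)$) is a correct refinement not used in the paper's proof.
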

 		
 	\begin{proof}
 		Similar to the proof of Theorem \ref{thm:perf-rcp}, we only need to show the first term of the confidence interval \eqref{I_N}. In light of Theorem \ref{thm:feas} and Lemma \ref{lem:feas}, we know that 
 			\begin{align}
 			\label{e:thm-ccp:1}
 				\PP^N \big[ \Jg{h(\eps)}\le \Jccp \le \Jscp \big] \ge 1 - \beta.
 			\end{align}
 		In the same spirit as the previous proof, Lemma \ref{lem:lip} ensures $\Jscp \le \Jrcp \le \Jg{h(\eps)} + \lip h(\eps)$ everywhere on $\Omega^N$, which together with \eqref{e:thm-ccp:1} arrives at \eqref{J_ccp-prior}. 
 		
 		To show \eqref{J_ccp-posterior}, let us consider the scenario counterpart of the relaxed program $\RCP_\gamma$ in \eqref{rcp-gamma} with $\gamma \Let h(\eps)$. We denote the optimal value of this scenario program by $J^\star_{N,h(\eps)}$. Thus, we have $J^\star_{N,h(\eps)} \le \Jg{h(\eps)}$ with probability 1. Notice that Assumption \ref{a:duality} also holds for the scenario program $\SCP$, and consequently Lemma \ref{lem:lip} is applicable to $\SCP$ as well. In fact, following the proof of Lemma \ref{lem:lip} \cite[p.~250]{ref:Boyd-04}, one can infer that the Lipschitz constant of the perturbation function can be over approximated by the $\ell_1$-norm of a dual optimizer of the optimization program. Therefore, applying Lemma \ref{lem:lip} to $\SCP$ yields to $\Jscp -\|\lambda^\star_N \|_1 h(\eps) \le J^\star_{N,h(\eps)} \le \Jg{h(\eps)}$ pointwise on $\Omega^N$. Substituting into \eqref{e:thm-ccp:1} leads to \eqref{J_ccp-posterior}.		
 	\end{proof}

	The parameter $\eps$ in Theorem \ref{thm:perf-rcp} is a design choice which can be tuned to shrink the confidence interval $[0,I(\eps)]$. On the contrary, in Theorem \ref{thm:perf-ccp} the parameter $\eps$ is part of the problem data associated with the program $\CCP$. That is, in Theorem \ref{thm:perf-ccp} $I(\eps)$ is indeed fixed and the number of scenarios $N$ in $\SCP$ only improves the confidence level $\beta$. In a same spirit but along a different approach, \cite[Theorem 6.1]{ref:Cal-10} bounds $\Jscp$ by the optimal solutions of two chance-constrained programs associated with different constraint violation levels, say $\bar \eps < \eps$. This value gap between the chance-constrained program and its scenario counterpart (either as explicitly derived in Theorem \ref{thm:perf-ccp} or implicitly by two chance-constrained programs in \cite[Theorem 6.1]{ref:Cal-10}) represents an inherent difference. To arbitrarily reduce the gap for $\CCP$, one may resort to \emph{optimally} discarding a fraction of scenarios, which is in general computationally intractable; see for example \cite[Theorem 6.1]{ref:Campi-11} and \cite[Theorem 6.2]{ref:Cal-10}. 
 	
	By virtue of Theorem \ref{thm:perf-rcp}, the gap between $\Jrcp$ and $\Jscp$ is effectively quantified by a ULB $h(\eps)$ as introduced in Definition \ref{def:tail}. To control the behavior of $h(\eps)$ as $\eps \ra 0$, one may require more structure on the measure $\PP$ defined on $\big(\D,\borel(\D) \big)$. Proposition \ref{prop:h} addresses this issue by introducing sufficient conditions concerning the measure of open balls in $\borel(\D)$ and the continuity of the constraint mapping in the uncertainty argument. 
 	 
 
 	\begin{Prop}
 		\label{prop:h}
 		Assume that the mapping  $\D \ni d \mapsto f(x,d) \in \R$ is Lipschitz continuous with constant $L_d$ uniformly in $x \in \X$. Suppose there exists a strictly increasing function $g:\R_+ \ra [0,1]$ such that 
 		$$ \PP \big[ \ball{d}{r} \big] \ge g(r), \qquad \forall d \in \D, $$
 		where $\ball{d}{r} \subset \D$ is an open ball centered at $d$ with radius $r$. Then, $h(\eps) \Let L_d\ g^{-1}(\eps)$ is a ULB in the sense of Definition \ref{def:tail}, where $g^{-1}$ is the inverse function of $g$.
 	\end{Prop}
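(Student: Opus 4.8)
The plan is to unfold the definition of a ULB and reduce the claim to a single implication about the level sets of the map $\delta \mapsto \inf_{x\in\X} p(x,\delta)$. Since the ULB requirement asks $h(\eps)$ to dominate $\sup\{\delta\in\R_+ : \inf_{x\in\X}p(x,\delta)\le\eps\}$, it suffices to show that every $\delta$ in this set lies below $L_d\,g^{-1}(\eps)$; that is, I would prove: for each $\delta\in\R_+$, if $\inf_{x\in\X}p(x,\delta)\le\eps$ then $\delta\le L_d\,g^{-1}(\eps)$. The whole argument thus boils down to producing a uniform-in-$x$ lower bound on $p(x,\delta)$ expressed through $g$, and then inverting it.

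First I would fix $x\in\X$, set $M_x\Let\sup_{v\in\D}f(x,v)$, and note that $p(x,\delta)=\PP[f(x,d)>M_x-\delta]$. Because the supremum need not be attained, for any slack $\eta\in(0,\delta)$ I would pick $v^\star=v^\star(x,\eta)\in\D$ with $f(x,v^\star)>M_x-\eta$. The Lipschitz hypothesis then gives, for every $d\in\ball{v^\star}{r}$ with $r\Let(\delta-\eta)/L_d$,
$$ f(x,d)\ge f(x,v^\star)-L_d\,\|d-v^\star\| > (M_x-\eta)-L_d\,r = M_x-\delta, $$
so the ball $\ball{v^\star}{r}$ is contained in the event defining $p(x,\delta)$. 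The measure-of-balls assumption then yields $p(x,\delta)\ge\PP[\ball{v^\star}{r}]\ge g(r)=g\big((\delta-\eta)/L_d\big)$. The decisive structural point is that $L_d$ is uniform in $x$ and that $g$ bounds the ball measure uniformly in the center, so this estimate holds uniformly over $x\in\X$.

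Taking the infimum over $x$ gives $\inf_{x\in\X}p(x,\delta)\ge g\big((\delta-\eta)/L_d\big)$ for every $\eta\in(0,\delta)$. Under the hypothesis $\inf_{x\in\X}p(x,\delta)\le\eps$ I would then invoke strict monotonicity of $g$ — which makes $g^{-1}$ well defined and order-preserving — to pass from $g\big((\delta-\eta)/L_d\big)\le\eps$ to $(\delta-\eta)/L_d\le g^{-1}(\eps)$, i.e. $\delta-\eta\le L_d\,g^{-1}(\eps)$. Letting $\eta\downarrow 0$ yields $\delta\le L_d\,g^{-1}(\eps)$, which is exactly the reduction isolated in the first paragraph; hence $h(\eps)=L_d\,g^{-1}(\eps)$ is a valid ULB.

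The hard part will be the non-attainment of $\sup_{v\in\D}f(x,v)$, which I handle cleanly via the $\eta$-slack together with the $\eta\downarrow 0$ limit, and the fact that $g$ is only assumed strictly increasing rather than continuous; the limiting argument combined with the monotonicity of $g^{-1}$ sidesteps any need for continuity of $g$. Minor points to record carefully are that $g^{-1}(\eps)$ is taken on the range of $g$, that the trivial case $\delta=0$ (and $L_d=0$) needs no argument, and that the events $\{d:f(x,d)>M_x-\delta\}$ are measurable, which follows from measurability of $f(x,\cdot)$.
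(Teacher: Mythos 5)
Your proof is correct, and its skeleton is the same as the paper's: lower-bound $p(x,\delta)$ uniformly in $x$ by exhibiting a ball around a (near-)maximizer of $v\mapsto f(x,v)$ that is contained in the violation event, invoke $\PP[\ball{\cdot}{r}]\ge g(r)$, and then invert $g$ to bound the level set $\sup\{\delta : \inf_x p(x,\delta)\le\eps\}$. Where you differ is in how the possible non-attainment of $\sup_{v\in\D}f(x,v)$ is handled. The paper takes a maximizing sequence $(v_i)$ with $\limsup_i f(x,v_i)=\sup_v f(x,v)$, bounds $p(x,\delta)\ge \PP[\limsup_i L_d\|v_i-d\|<\delta]$, and then appeals to Fatou's lemma to pull out $\limsup_i\PP[\ball{v_i}{\delta/L_d}]\ge g(\delta/L_d)$; this yields the slightly stronger pointwise bound $p(x,\delta)\ge g(\delta/L_d)$, but the Fatou step is delicate because $\{d:\limsup_i\|v_i-d\|<\delta/L_d\}$ is not the set-theoretic $\limsup$ of the balls $\ball{v_i}{\delta/L_d}$ (for a non-convergent maximizing sequence the two events can differ substantially), so as written that inequality needs additional justification. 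Your $\eta$-slack argument with a single near-maximizer $v^\star$ only gives $p(x,\delta)\ge g\big((\delta-\eta)/L_d\big)$, but since you then let $\eta\downarrow 0$ after inverting $g$, you recover exactly the same ULB conclusion while using nothing beyond monotone containment of events; it is the more elementary and, in this instance, the more airtight route. Your closing remarks on $g^{-1}$ being taken on the range of $g$ (equivalently, reading $g^{-1}(\eps)$ as $\sup\{s:g(s)\le\eps\}$) and on measurability of the events are exactly the right caveats and match what the paper leaves implicit.
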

 	
 	\begin{proof}
 		See Appendix \ref{app-A}. 
 	\end{proof}
 	
 	Proposition \ref{prop:h} generalizes the corresponding result of \cite[Lemma 3.1]{ref:KanTak} by allowing the uncertainty space $\D$ to be possibly an infinite dimensional space. Note that the required assumptions in Proposition \ref{prop:h} implicitly require $\D$ to be bounded, though in practice this may not be really restrictive. 
 	
	\begin{Rem} \label{rem:curse}
	Two remarks regarding the function $g$ in Proposition \ref{prop:h} are in order:

		\begin{enumerate}[label = (\roman*), align = right, widest=m, leftmargin= 1cm]
			\item \textbf{Explicit expression:} Under the hypotheses of Proposition \ref{prop:h}, Theorem \ref{thm:perf-rcp} can be expressed in more explicit form. Let $\eps$ and $\beta$ be in $[0,1]$, $L_d$ be the Lipschitz constant of the constraint function $f$ in $d$, $\lip$ be the constant \eqref{lip}, and $N(\cdot,\cdot)$ be as defined in \eqref{N}. Then, for any $N \ge N\big( g(\frac{\eps}{\lip L_d}),\beta \big)$ we have
			\begin{align*}
				\PP^N \Big[ \Jrcp - \Jscp \in \big[0,\eps \big] \Big] \ge 1-\beta.
			\end{align*}
			
			\item \textbf{Curse of dimensionality:} For an $n_d$-dimensional uncertainty set $\D$, the number of disjoint balls in $\D$ with radius $r$ grows proportional to $r^{-n_d}$ as $r$ decreases. Thus, the assumptions of Proposition \ref{prop:h} imply that $g(r)$ is of the order of $r^{n_d}$. Therefore, for the desired precision $\eps$, as detailed in the preceding remark, the required number of samples $N$ grows exponentially as ${\eps}^{-n_d}$. This appears to be an inherent feature when one seeks to bound the optimal value via scenario programs; see \cite{ref:LecLyg-08,ref:LecLyg-10} for similar observations.
			
		\end{enumerate}
	\end{Rem}

%
 
 \subsection{Feasibility of \textnormal{RCP} via \textnormal{SCP}}\label{subsec:feas}
 
 	In this subsection we provide an example to show the inherent difficulty of the feasibility connection from $\SCP$ to the original problem $\RCP$. Consider the following $\RCP$ with its $\SCP$ counterpart in which both decision and uncertainty space are compact subsets of $\R$:
 		\begin{align*}
 			\left\{ \begin{array}{ll}
 				\min\limits_{x}					& -x   \\
 				\text{s.t. } 			& x - d \leq 0, \quad \forall d \in \D \Let [0,1]\\
 										& x\in \X\Let [-1,1] 
 		\end{array} \right.
 		\qquad 
 			\left\{ \begin{array}{ll}
 				\min\limits_{x}					& -x   \\
 				\text{s.t. } 			& x - d_i \leq 0, \quad \forall i \in \{1,\cdots,N\}\\
 										& x\in \X\Let [-1,1] 
 			\end{array} \right. .
 	\end{align*}	
 	It is not difficult to see that the feasible set of the robust program is $[-1,0]$ with the optimizer $x^\star = 0$, whereas the optimizer of its scenario program is $x^\star_N = \min_{ i \le N} d_i$. If the probability measure $\PP$ does not have atoms (point measure), we have $\PP^N \big[ \min_{ i \le N} d_i > 0 \big] = 1$. Thus, one can deduce that
 		 \begin{align*}
 		 	\PP^N \big[ x^\star_N \sat \RCP \big] = 0, \qquad \forall \PP\in\mathcal{P}, \quad \forall N \in \N,
 		 \end{align*} 
 	where $\mathcal{P}$ is the family of all nonatomic measures on $\big(\D,\borel(\D)\big)$. More generally, if the set $\arg\max_{d \in \D}f(x,d)$ has measure zero for any $x \sat \RCP$ (e.g., when $f$ is convex in $d$ and the boundary of $\D$ has zero measure), then the program $\SCP$ will almost surely return infeasible solutions to the program $\RCP$, as the worst-case scenarios are almost surely neglected.
 	
 \subsection{Measurability of the \textnormal{SCP} optimizer}\label{subsec:meas}
 	The objective of this subsection is to address the standing Assumption \ref{a:meas}. The measurability of the optimizer $x^\star_N$ for the scenario program $\SCP$ is a rather involved technical issue. In fact, to the best of our knowledge, in the literature this issue is always resolved by introducing an assumption. Let us highlight that the measurability of optimal values and the set of optimizers as well as the existence of a measurable selection are classical results in this context, see for instance  \cite[Theorem 14.37, p.\ 664]{ref:Rockafellar-10}. However, there is no a priori guarantee that the obtained optimizer of the program $\SCP$ can be viewed as a measurable mapping from ${\D}^N$ to $\X$. Toward this issue, we propose a ``two-stage" optimization program, in the \emph{lexicographic} sense in the context of multi-objective optimization problems \cite{ref:Lex-04}, in which the measurability of this mapping is ensured for a large class of programs (not necessarily convex).
 	
 	For the rest of this section we assume that $\X \subset \R^n$ is closed and the mapping $x \mapsto f(x,d)$ is lower semicontinuous. Consider the scenario program $\SCP$ as defined in \eqref{scp} with the corresponding optimal value $\Jscp$; $\SCP$ is assumed to be feasible with probability one. Given the same uncertainty samples $(d_i)_{i=1}^N$ as in $\SCP$, we introduce the second program
 	\begin{align}
 	 \label{scp-2}
 		\left\{ \begin{array}{ll}
 				\min\limits_{x}					& \phi(x)   \\
 				\text{s.t. } 			& f(x,d_i) \leq 0, \quad \forall i \in \{1,\cdots,N\}\\
 										& c\tr x \le \Jscp \\
 										& x\in \X 
 		\end{array} \right. ,
 	\end{align}
 	where $\phi : \R^n \ra \R$ is a strictly convex function. Let us denote the optimizer of the above program  by $\wt{x}^\star_N$. It is straightforward to observe that $\wt{x}^\star_N$ is indeed an optimizer of the program $\SCP$. 
 	
 	\begin{Prop}[Measurability of the Optimizer]
 		\label{prop:meas}
 		Consider the sequential two-stage programs $\SCP$ and \eqref{scp-2}, with the optimizer $\wt{x}^\star_N$ for the latter program. Then, the mapping $\D^N \ni (d_i)_{i=1}^N \mapsto \wt{x}^\star_N \in \X$ is a singleton and measurable.
 	\end{Prop}
 	
 	\begin{proof}
 		See Appendix \ref{app-A} along with some preparatory lemmas. 
 	\end{proof}
 	
 	The above two-stage program may be viewed as a tie-break rule \cite{ref:CalCam-05} or a regularization procedure \cite[Section 2.1]{ref:Cal-10}, which was proposed to resolve the uniqueness property of the $\SCP$ optimizer. Proposition \ref{prop:meas} indeed asserts that the same trick ensures the measurability of the optimizer as well. 
 	
 	\begin{Rem}[{Measurability of the Feasible Set}]
		The measurability of the feasibility event $\wt x^\star_N \sat \CCP$ (equivalently the measurability of the mapping $x \mapsto \PP[f(x,d) \le 0 ]$) is a straightforward consequence of Proposition \ref{prop:meas} and Fubini's Theorem \cite[Thm.\ 18.3, p.\ 234]{ref:Bil-95}.
 	\end{Rem}
	
\section{Extension to a Class of Non-Convex Programs} \label{sec:extension}

This section extends the results developed in Section \ref{subsec:perf} to a class of non-convex problems. Consider a family of programs introduced in \eqref{rcp-ccp} in which the program data are indexed by $k$, i.e., $(\X_k, f_k, \eps_k)_{k=1}^m$. We assume that each tuple $(\X_k, f_k, \eps_k)$ satisfies the required conditions in Section \ref{sec:problem} (i.e., $\X_k$ is a compact convex set and the mapping $x \mapsto f_k(x,d)$ is convex for every $d \in \D$), and the corresponding programs are denoted by $\RCPk$ and $\CCPk$ as defined in \eqref{rcp-ccp}. Consider the following (non-convex) optimization problems:
 	\begin{align}
 	\label{rp-cp}
 		& \RP: \left\{ \begin{array}{ll}
 				\min\limits_{x}					& c\tr x   \\
 				\text{s.t. } 			& x \sat \bigcup\limits_{k=1}^m \RCPk
 		\end{array} \right. 
 		\qquad
 		\CP: \left\{ \begin{array}{ll}
 				\min\limits_{x}					& c\tr x   \\
 				\text{s.t. } 			& x \sat \bigcup\limits_{k=1}^m \CCPk
 		\end{array}\right.,
 	\end{align}
 where $x \sat \bigcup_{k=1}^m \RCPk$ \big(resp. $x \sat \bigcup_{k=1}^m \CCPk$\big) indicates that there exists $k \in \{1,\cdots,m\}$ such that $x \sat \RCPk$ \big(resp. $x \sat \CCPk$\big). In other words, the programs \eqref{rp-cp} seek an optimal solution which is feasible for  at least one of the subprograms indexed by $k$, while the uncertainty space $\D$ as well as the associated measure $\PP$ is shared between all the subprograms. Similarly, given i.i.d.\ samples $(d_i)_{i=1}^{N} \subset \D$ with respect to the probability measure $\PP$, consider the scenario (non-convex) program
 	\begin{align}
 	\label{sp}
 		\SP: \left\{ \begin{array}{ll}
 						\min\limits_{x}					& c\tr x   \\
 						\text{s.t. } 			& x \sat \bigcup\limits_{k=1}^m \SCPk
 		\end{array} \right. .
 	\end{align}
 Each subprogram $\SCPk$ is defined according to the scenario convex program \eqref{scp} associated with the program data $(\X_k, f_k)$ while the uncertainty samples $(d_i)_{i=1}^{N}$ are the same for all $k \in \{1,\cdots,m\}$. Before proceeding with the main result of this section, let us point out that the programs \eqref{rp-cp} contain, for example, a class of mixed integer programs. Let $f: \R^n \times \{0,1\}^{\ell} \times \D \ra \R$ be the constraint function in \eqref{rcp-ccp}. It is straightforward to see that a chance-constrained mixed integer program can be formulated as
	\begin{align*}
		& \left\{ \begin{array}{ll}
				\min\limits_{x,y}					& c\tr x   \\
 				\text{s.t. } 			& \PP [ f(x,y,d) \le 0 ] \ge 1 - \eps \\
 										& x \in \X, \quad y \in \{0,1\}^\ell 
		\end{array} \right. 
		\Llra \quad 
		\left\{ \begin{array}{ll}
				\min\limits_{x}					& c\tr x   \\
 				\text{s.t. } 			& \max\limits_{k \in \{1,\cdots,2^\ell\}} \PP [ f_k(x,d) \le 0 ] \ge 1 - \eps \\
 										& x \in \X 
		\end{array}\right.,
	\end{align*}
 where $f_k(x,d) \Let f(x,y_k,d)$ for each selection of the binary variables $y_k \in \{0,1\}^\ell$. Then, by setting $m \Let 2 ^\ell$, $\X_k \Let \X$, $\eps_k \Let \eps$, the right-hand side of the above relation is readily in the framework of \eqref{rp-cp}. A similar argument also holds for the robust mixed integer problems counterparts. 
 
 As a first step, we extend the feasibility result of Theorem \ref{thm:feas} to the non-convex setting in \eqref{rp-cp}.
 	\begin{Thm}[$\CP$ Feasibility]
 		\label{thm:feas-nonconvex}
 		Let $\vec{\eps} \Let (\eps_1,\cdots,\eps_m)$, $\beta\in(0,1]$, and $N \ge \wt{N}(\vec \eps,\beta)$ where 
 		\begin{equation}
 		\label{N-tilde}
 			\wt N(\vec{\eps},\beta) \Let \min \bigg\{ N\in \N ~\Big|~ \sum_{k=1}^{m} \sum_{i=0}^{n-1}{N\choose i} \eps_k^{i}(1-\eps_k)^{N-i}\leq\beta \bigg\}.
 		\end{equation}
 		Then, the optimizer of $\SP$ is a feasible solution of $\CP$ with probability at least $1-\beta$. 
  	\end{Thm}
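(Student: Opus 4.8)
The plan is to reduce the non-convex feasibility statement to the convex feasibility result of Theorem \ref{thm:feas}, applied separately to each of the $m$ subprograms $\SCPk$, and then to paste the estimates together with a union bound. The structural observation that makes this work is that minimizing the linear objective $c\tr x$ over the union $\bigcup_{k=1}^m\{x\sat\SCPk\}$ is the same as minimizing over each piece and then taking the smallest value, so the optimal value of $\SP$ equals $\min_{k} J^{\star(k)}_N$, where $J^{\star(k)}_N$ is the optimal value of $\SCPk$. Accordingly I would fix the (random) winning index $k^\star\Let\min\{k: J^{\star(k)}_N=\min_{j}J^{\star(j)}_N\}$ and take as the canonical $\SP$ optimizer $x^\star_{\SP}\Let x^{\star(k^\star)}_N$, where each $x^{\star(k)}_N$ is the unique measurable optimizer of $\SCPk$ furnished by the regularized two-stage construction of Subsection \ref{subsec:meas}. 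By construction $x^\star_{\SP}$ is feasible for $\SP$ and attains the value $\min_k J^{\star(k)}_N$, so it is indeed an optimizer of $\SP$, and it is measurable since $k^\star$ is a measurable argmin selection of measurable optimal values.

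Next I would make the relevant event inclusion precise. Let $B\Let\{x^\star_{\SP}\nsat\CP\}$ be the failure event; by the definition of $\CP$ in \eqref{rp-cp}, the relation $x^\star_{\SP}\nsat\CP$ means $x^\star_{\SP}\nsat\CCPk$ for \emph{every} $k$, in particular for $k=k^\star$. For each fixed $k$ set $B_k\Let\{x^{\star(k)}_N\nsat\CCPk\}$. Since $x^\star_{\SP}=x^{\star(k^\star)}_N$ by construction, the occurrence of $B$ forces $x^{\star(k^\star)}_N\nsat\CCPk^{(k^\star)}$, i.e.\ the event $B_{k^\star}$ occurs, whence
\[
B\ \subseteq\ \bigcup_{k=1}^m B_k .
\]

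It then remains to bound each $\PP^N[B_k]$. Every tuple $(\X_k,f_k,\eps_k)$ satisfies the hypotheses of Section \ref{sec:problem}, so Theorem \ref{thm:feas} applied to the convex pair $(\SCPk,\CCPk)$ gives, for any sample size $N$, the estimate $\PP^N[B_k]\le\sum_{i=0}^{n-1}\binom{N}{i}\eps_k^{i}(1-\eps_k)^{N-i}$. Combining this with the inclusion above and the union bound yields
\[
\PP^N[B]\ \le\ \sum_{k=1}^m\PP^N[B_k]\ \le\ \sum_{k=1}^m\sum_{i=0}^{n-1}\binom{N}{i}\eps_k^{i}(1-\eps_k)^{N-i}.
\]
By the definition of $\wt N(\vec\eps,\beta)$ in \eqref{N-tilde}, the right-hand side is at most $\beta$ whenever $N\ge\wt N(\vec\eps,\beta)$, so $\PP^N[x^\star_{\SP}\sat\CP]=1-\PP^N[B]\ge 1-\beta$, which is the claim. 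I would also note that the additive (rather than multiplicative) way the indices $k$ enter the union bound is exactly what produces the \emph{linear} growth in $m$ in \eqref{N-tilde}.

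The delicate step is the structural one, not the probabilistic one: the argument relies on identifying the $\SP$ optimizer with the distinguished \emph{measurable} optimizer $x^{\star(k^\star)}_N$ of a single subprogram to which Theorem \ref{thm:feas} applies. Without a fixed tie-breaking rule the optimizer of $\SP$ need not coincide with the canonical optimizer of $\SCPk^{(k^\star)}$, and then the inclusion $B\subseteq\bigcup_k B_k$ could break down; this is precisely why the uniqueness and measurability supplied by Subsection \ref{subsec:meas} are invoked both to make $x^\star_{\SP}$ a bona fide measurable object and to guarantee that the failure event $B$ is captured by the subprogram failure events $B_k$. Once this is secured, the remainder is the routine union-bound computation above.
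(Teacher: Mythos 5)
Your proposal is correct and follows essentially the same route as the paper's proof: identify the $\SP$ optimizer with the optimizer of one of the subprograms $\SCPk$, apply Theorem \ref{thm:feas} to each convex pair $(\SCPk,\CCPk)$, and combine via a union bound over $k$. The additional care you take with the measurable tie-breaking selection of the winning index $k^\star$ is a welcome sharpening of the paper's brief remark that the $\SP$ optimizer ``belongs to the set $(x^\star_{N,k})_{k=1}^m$'', but it does not change the substance of the argument.
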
 
 
 \begin{proof}
 	Let $ x^\star_{N,k}$ be the optimizer of the subprogram $\SCPk$. By virtue of Theorem \ref{thm:feas}, one can infer that 
 	\begin{align*}
 		\PP^N\Big[ x^\star_{N,k} \nsat \CCPk \Big] < \sum_{i=0}^{n-1}{N\choose i} \eps_k^{i}(1-\eps_k)^{N-i}.
 	\end{align*}
 	On the other hand, it is straightforward to observe that the optimizer of the program $\SP$, denoted by $ x^\star_N$, belongs to the set $(x^\star_{N,k})_{k=1}^m$. Therefore,
 	\begin{align*}
 		\PP^N\Big[ x^\star_N \nsat \CP  \Big] &\le \PP^N\Big[ \exists k \in \{1,\cdots,m\} ~\big|~ x^\star_{N,k} \nsat \CCPk  \Big] \le \sum_{k=1}^m \PP^N\Big[ x^\star_{N,k} \nsat \CCPk  \Big] \\
 		& < \sum_{k=1}^m \sum_{i=0}^{n-1}{N\choose i}\eps_k^{i}(1-\eps_k)^{N-i}, 
 	\end{align*}
 	leading to the desired assertion.
 \end{proof}
 
 		\begin{Rem}[Growth rate]
 			\label{rem:}
 			Notice that the number of subprograms, $m$, contributes to the confidence level $\beta$ in a linear fashion. As an illustration, suppose  $\eps_k \Let \eps$. In this case, one can easily verify that the confidence level of the non-convex program $\SP$ can be set equal to $\frac{\beta}{m}$, where $\beta$ is the confidence level of each of the subprograms $\SCPk$. From a computational perspective, one can follow the same calculation as in \cite{ref:Cal-09}, and deduce that the contribution of $m$ to the number of the required samples $\wt N$ appears in a logarithm. Thus, in our example of mixed integer programming above, the required number of samples grows linearly in the number of binary variables, which for most of applications could be considered a reasonable growth rate. 
 		\end{Rem}
 	
	The literature on computational schemes on non-convex problems is mainly based on statistical learning methods. A recent example of this nature is \cite{ref:Alamo-09}, which considers a class of problems involving Boolean expressions of polynomial functions. Given the degree and number of polynomial functions ($\alpha$ and $k$, respectively), the explicit sample bounds of \cite{ref:Alamo-09} scale with ${\eps^{-1}} \log (\alpha k \eps^{-1})$ as opposed to our result in \eqref{N-tilde} which grows proportional to $\eps^{-1}\log(m)$. We now proceed to extend the main results of Subsection \ref{subsec:perf}, i.e., Theorems \ref{thm:perf-rcp} and \ref{thm:perf-ccp}, to the non-convex settings \eqref{rp-cp} and \eqref{sp} at once. 
	

 	\begin{Thm}[$\RP$ \& $\CP$ Confidence Intervals]
 		\label{thm:perf-nonconvex}
 		Consider the programs $\RP$, $\CP$, and $\SP$ in \eqref{rp-cp} and \eqref{sp} with the corresponding optimal values $\Jrp$, $\Jcp$, and $\Jscp$. Given $k \in \{1,\cdots,m\}$ and the program data $(\X_k , f_k)$, let Assumption \ref{a:duality} hold and $I^{(k)}$ and $I_N^{(k)}$ be the a priori and a posteriori confidence intervals of the $k^{\text{th}}$ subprogram as defined in \eqref{I} and \eqref{I_N}. 
 		Then, given $\beta \in [0,1]$ and $\vec{\eps} \Let (\eps_1, \cdots, \eps_m) \in [0,1]^m$, for all $N \ge \wt N(\vec{\eps}, \beta)$ as defined in \eqref{N-tilde} we have
 						\begin{align*}
 						\text{A Priori Assessment:}& \qquad 
 						\begin{cases}
 							\PP^N \Big[ \Jrp - \Jscp \in \big[0, ~\max\limits_{k\le m} I^{(k)}(\eps)\big] \Big] &\ge 1-\beta, \\
 							\PP^N \Big[ \Jcp - \Jscp \in \big[-\max\limits_{k\le m} I^{(k)}(\eps), ~0\big] \Big]  &\ge 1-\beta,  
 						\end{cases} 
 						\\
 						\text{A Posteriori Assessment:}& \qquad \quad \PP^N \Big[ \Jcp - \Jscp \in \big[-\max\limits_{k\le m} I^{(k)}_N(\eps), ~0\big] \Big] \quad  \ge 1-\beta  .
 						\end{align*}
 	\end{Thm}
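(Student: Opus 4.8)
The plan is to reduce the three non-convex estimates to the convex confidence intervals of Theorems \ref{thm:perf-rcp} and \ref{thm:perf-ccp}, applied separately to each of the $m$ subprograms, and then to glue the resulting estimates together by a single union bound---exactly the device already used in the proof of Theorem \ref{thm:feas-nonconvex}. The key structural observation is that, since all subprograms share the objective $c\tr x$ and the feasible set of each non-convex program is the \emph{union} of the feasible sets of its subprograms, the optimal values split as minima over $k$. Writing $R_k$, $C_k$, and $S_k\Let J^\star_{N,k}$ for the optimal values of $\RCPk$, $\CCPk$, and $\SCPk$ respectively, one has $\Jrp=\min_{k\le m}R_k$, $\Jcp=\min_{k\le m}C_k$, and $\Jscp=\min_{k\le m}S_k$. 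Moreover, as already noted in the proof of Theorem \ref{thm:feas-nonconvex}, the $\SP$-optimizer $x^\star_N$ coincides with one of the subprogram optimizers $x^\star_{N,k}$.

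First I would introduce the good event $G\Let\bigcap_{k=1}^m\{x^\star_{N,k}\sat\CCPk\}$, on which every subprogram optimizer is feasible for its own chance-constrained program. By the union bound together with Theorem \ref{thm:feas} applied to each subprogram,
$$\PP^N[G^c]\le\sum_{k=1}^m\PP^N\big[x^\star_{N,k}\nsat\CCPk\big]<\sum_{k=1}^m\sum_{i=0}^{n-1}{N\choose i}\eps_k^{i}(1-\eps_k)^{N-i}\le\beta$$
whenever $N\ge\wt N(\vec\eps,\beta)$, so $\PP^N[G]\ge 1-\beta$. This is the sole probabilistic ingredient of the argument; everything that follows holds deterministically on $G$.

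Next I would transcribe the deterministic content of Theorems \ref{thm:perf-rcp} and \ref{thm:perf-ccp} for a fixed $k$: on $\{x^\star_{N,k}\sat\CCPk\}$, Lemmas \ref{lem:feas} and \ref{lem:lip} yield
$$R_k-S_k\le I^{(k)}(\eps_k)\qquad\text{and}\qquad S_k-I^{(k)}(\eps_k)\le C_k\le S_k,$$
with $I^{(k)}(\eps_k)$ replaced by the tighter $I_N^{(k)}(\eps_k)$ in the a posteriori version (here one uses that Assumption \ref{a:duality}, and hence Lemma \ref{lem:lip}, applies verbatim to $\SCPk$, so that $\|\lambda^\star_{N,k}\|_1$ controls the perturbation of its value function). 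On $G$ all $m$ of these inequalities hold at once, and the three claims follow by selecting the appropriate index. Letting $k^\star$ attain $\Jscp=S_{k^\star}$, one gets $\Jrp\le R_{k^\star}\le S_{k^\star}+I^{(k^\star)}(\eps_{k^\star})=\Jscp+I^{(k^\star)}(\eps_{k^\star})\le\Jscp+\max_{k\le m}I^{(k)}(\eps_k)$, while $\Jrp\ge\Jscp$ holds pointwise because $\SP$ relaxes $\RP$; this is the $\RP$ interval. For $\CP$, on $G$ the optimizer $x^\star_N=x^\star_{N,k^\star}$ is feasible for the $k^\star$-th chance-constrained program and therefore for $\CP$, giving $\Jcp\le c\tr x^\star_N=\Jscp$; and if $k^{\star\star}$ attains $\Jcp=C_{k^{\star\star}}$, then $\Jcp\ge S_{k^{\star\star}}-I^{(k^{\star\star})}(\eps_{k^{\star\star}})\ge\Jscp-\max_{k\le m}I^{(k)}(\eps_k)$, using $S_{k^{\star\star}}\ge\min_k S_k=\Jscp$. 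The a posteriori $\CP$ bound is identical with $I^{(k)}$ replaced throughout by $I_N^{(k)}$.

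The one genuine subtlety---and the step I expect to require the most care---is that the optimal indices $k^\star$ and $k^{\star\star}$ are random, so one may not simply invoke the fixed-$k$ result: the estimates must instead be established simultaneously on the uniform event $G$. The price of this uniformity is the outer summation over $k$ in $\wt N(\vec\eps,\beta)$, i.e.\ precisely the (only linearly growing) sample penalty already quantified in Theorem \ref{thm:feas-nonconvex} and Remark \ref{rem:}.
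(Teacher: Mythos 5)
Your proposal is correct and follows essentially the same route as the paper's (sketched) proof: decompose the non-convex programs into their $m$ convex subprograms, apply Lemmas \ref{lem:feas} and \ref{lem:lip} (i.e., the deterministic content of Theorems \ref{thm:perf-rcp} and \ref{thm:perf-ccp}) to each, and control all $m$ feasibility events simultaneously via the union bound of Theorem \ref{thm:feas-nonconvex} before taking the worst case over $k$. Your write-up is in fact more detailed than the paper's sketch, and your explicit handling of the random optimal indices $k^\star$, $k^{\star\star}$ via the uniform event $G$ makes precise exactly the point the paper leaves implicit.
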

 	
 	\begin{proof}[Sketch of the proof]
 		The proof effectively follows the same lines as in the proofs of Theorems \ref{thm:perf-rcp} and \ref{thm:perf-ccp}. To adapt the required preliminaries, let us recall again that the optimizer of the programs \eqref{rp-cp} is one of the optimizers of the respective subprograms. The same assertion holds for the random program \eqref{sp} as well. Moreover, since each subprogram of \eqref{sp} fulfills the assumptions of Subsection \ref{subsec:perf}, Lemmas \ref{lem:feas} and \ref{lem:lip} also hold for each subprogram with the corresponding data $(\X_k , f_k)$. Therefore, in light of Theorem \ref{thm:feas-nonconvex}, it only suffices to consider the worst-case possibility among all the subprograms.
 	\end{proof}


\section{Simulation Results} \label{sec:simulation}
	
	This section presents two examples to illustrate the theoretical results developed in the preceding sections and their performance. We first apply the results to a simple example whose analytical solution is available.
 
 \subsection{Example 1: Quadratic Constraint via Infinite Hyperplanes}  \label{ex:1}
 	Let $x = [x_1, x_2]\tr$ be the decision variables selected in the compact set $\X \Let [0,1]^2 \subset \R^2$, the linear objective function defined by $c \Let [-1,-1]\tr$, and the constraint function $f(x,d) \Let x_1\cos(d) + x_2\sin(d) - 1$ where the uncertainty $d$ comes from the set $\D \Let [0,2\pi]$. Consider the optimization problems introduced in \eqref{rcp-ccp} where $\PP$ is the uniform probability measure on $\D$. It is not difficult to infer that the infinitely many hyperplane constraints can be replaced by a simple quadratic constraint. That is, for any $\gamma \ge 0$
 		\begin{align*}
 			\max_{d \in [0,2\pi]} x_1\cos(d) + x_2\sin(d) - 1 \le \gamma \qquad \Longleftrightarrow \qquad x_{1}^{2}+x_{2}^{2} \le (\gamma + 1)^2.
 		\end{align*}
 	In the light of the above observation, we have the analytical solutions
 		\begin{align}
 		\label{solu}
 			\Jg{\gamma} = \max \Big\{-\sqrt{2}(\gamma + 1), -2 \Big\}, \qquad \Jccp = \max \Big\{ \frac{-\sqrt{2}}{\cos(\pi\eps)},-2 \Big\},
 		\end{align}
 	where $\Jg{\gamma}$ and $\Jccp$ are the optimal values of the optimization problems $\RCP_\gamma$ and $\CCP$ as defined in \eqref{rcp-gamma} and \eqref{rcp-ccp}, respectively. The pictorial representation of the solutions is in Figure \ref{fig:CCP}. 
 	\begin{figure}[t!] 
 			 \begin{minipage}{0.5\linewidth} 
 			  \centering 
 			  \includegraphics[scale = 0.82]{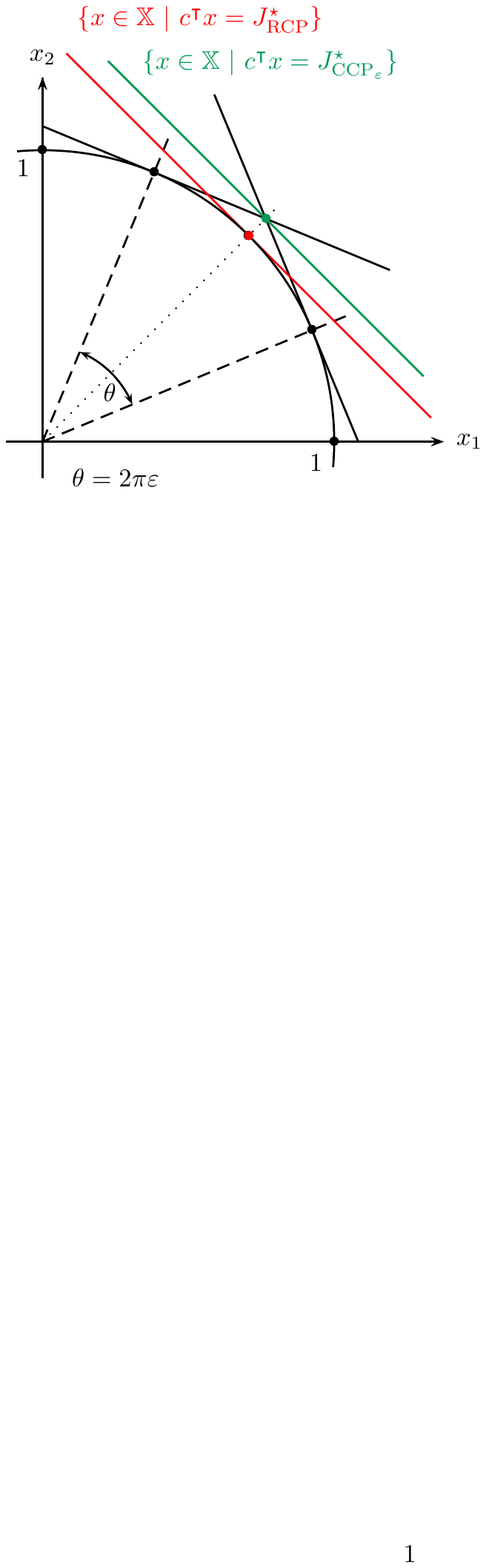} 
 			  \caption{Analytical solutions of Example~1} 
 			  \label{fig:CCP} 
 			 \end{minipage} 
 			 \begin{minipage}{0.49\linewidth} 
 			  \centering 
 			  \includegraphics[scale = 0.25]{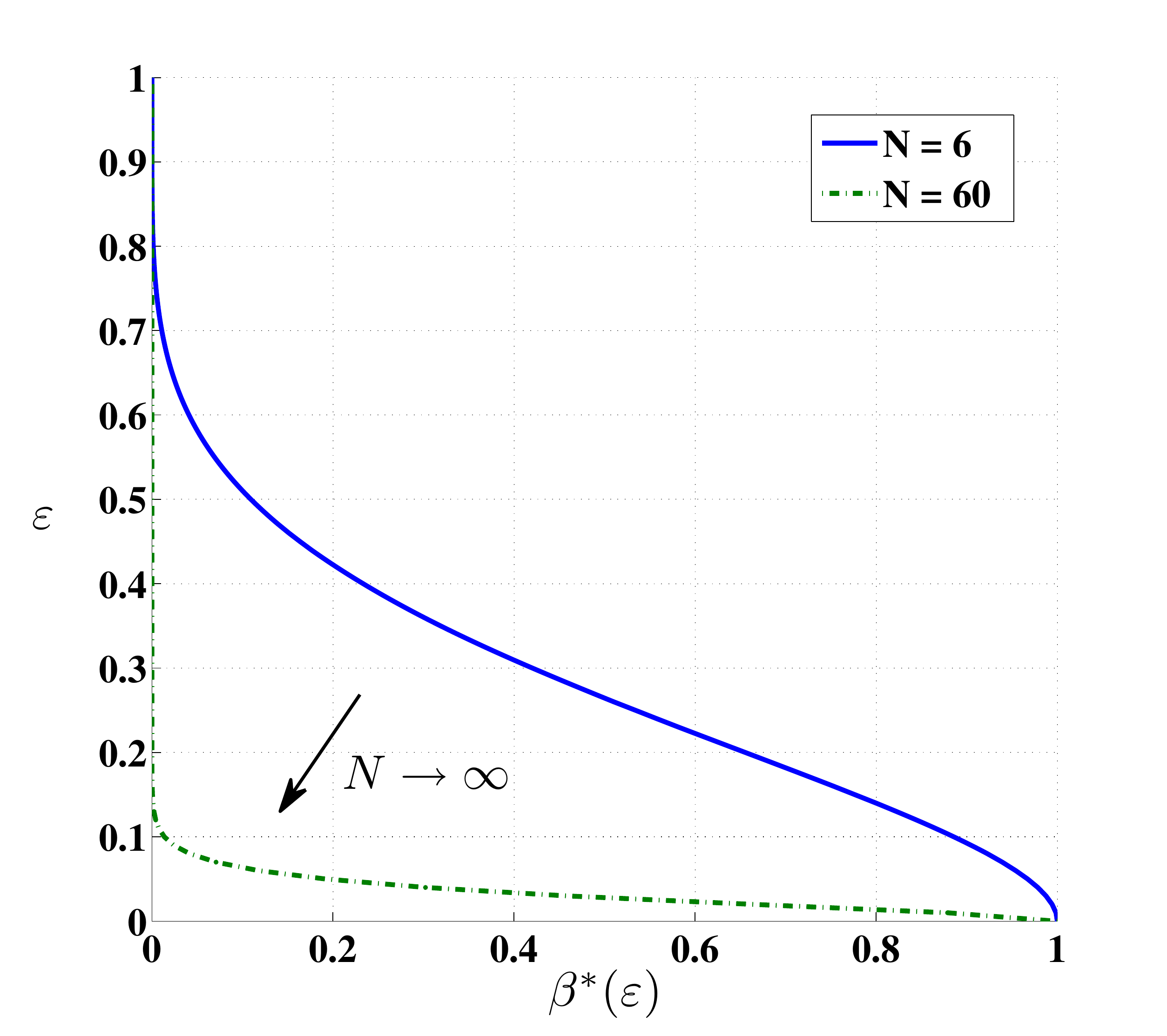}
 			  \caption{Behavior of the confidence level $\beta^*(\eps)$ in terms of scenarios numbers $N$} 
 			  \label{fig:EpsBeta} 
 			 \end{minipage} 
 	\end{figure}
 	
 	Let us fix the number of scenarios $N$ for $\SCP$ in \eqref{scp} with the optimal value $\Jscp$. Given $N$ and $\eps \in [0,1]$, the confidence level $\beta \in [0,1]$ associated with our theoretical results is
 		\begin{align*}
 			\beta^*(\eps) \Let \sum_{i=0}^{n-1} {N\choose i} \eps^i(1-\eps)^{N-i} = (1 - \eps)^N + N\eps(1-\eps)^{N-1},
 		\end{align*} 
 	where $n = 2$ in this example. Figure \ref{fig:EpsBeta} depicts the behavior of $\beta^*(\eps)$ for different values of $N$. Note that $x_0=[0, 0]\tr$ is a Slater point in the sense of Assumption \ref{a:duality} with the corresponding constant $\lip \Let \frac{-2-0}{-1}=2$ (cf. \eqref{lip}). Moreover, it is easy to see that the mapping $d \mapsto f(x,d)$ has the Lipschitz constant $L_d = \sqrt{2}$ over the compact set $\X = [0,1]^2$.
	Thanks to Proposition \ref{prop:h} (and periodicity of the constraint function over the interval $[0,2\pi]$), it is straightforward to introduce $g(r) = \frac{r}{\pi}$, and consequently obtain the ULB candidate $h(\eps) \Let \sqrt{2}\pi \eps$. Then, the confidence interval defined in \eqref{I} is $I(\eps) \Let \max\{2\sqrt{2}\pi \eps, 2\}$. As shown in Theorem \ref{thm:perf-rcp} (resp.\ Theorem \ref{thm:perf-ccp}) we know that $\Jrcp - \Jscp \in [0,I(\eps)]$ \big(resp.\ $\Jccp -\Jscp \in [-I(\eps),0]$\big) with probability at least $1-\beta^*(\eps)$ for any $\eps \in [0,1]$. To validate this result, we solve the program $\SCP$ for $M$ different experiments. For each experiment $k \in \{1,\cdots,M\}$, we draw $N$ scenarios $\big(d_{i}(k)\big)_{i=1}^N \subset [0,2\pi]$ with respect to the uniform probability distribution $\PP$ and solve the program $\SCP$. Let $\Jscp(k)$ be the optimal value of the $k^{\text{th}}$ experiment. Given $\beta \in [0,1]$, the empirical confidence interval of the program $\RCP$ can be represented by the minimal $\wt I (\beta)$ so that the interval $[0,\wt I(\beta)]$ contains $\Jscp(m) - \Jrcp$ for at least $m$ experiments where $\frac{m}{M} \ge 1 - \beta$, i.e., 
 		\begin{align*}
 			\wt I(\beta) \Let \min \Big\{\wt I \in \R_{+} & ~\big|~ \exists A \subset \{1,\cdots, M\} ~:~ \\
 			& |A|  \ge (1-\beta)M \quad \mbox{and} \quad \Jrcp - \Jscp(k) \in [0,\wt I] ~ \forall k \in A \Big\}.
 		\end{align*}
 	Regarding the program $\CCP$, notice that the empirical confidence interval depends on both parameters $\eps$ and $\beta$ since the analytical optimal values $\Jccp$ depends on $\eps$ as well. Hence, we define 
 		\begin{align*}
 			\wt I_{\eps}(\beta) \Let \min \Big\{\wt I \in \R_{+} & ~\big|~ \exists A \subset \{1,\cdots, M\} ~:~ \\
 			& |A|  \ge (1-\beta)M \quad \text{and} \quad \Jccp - \Jscp(k) \in [-\wt I, 0] ~ \forall k \in A \Big\}.
 		\end{align*}
 		
		\begin{figure}[th!]
			\centering
			\subfigure[Simulations for $N = 6$]{\label{fig:N6}\includegraphics[scale = 0.22]{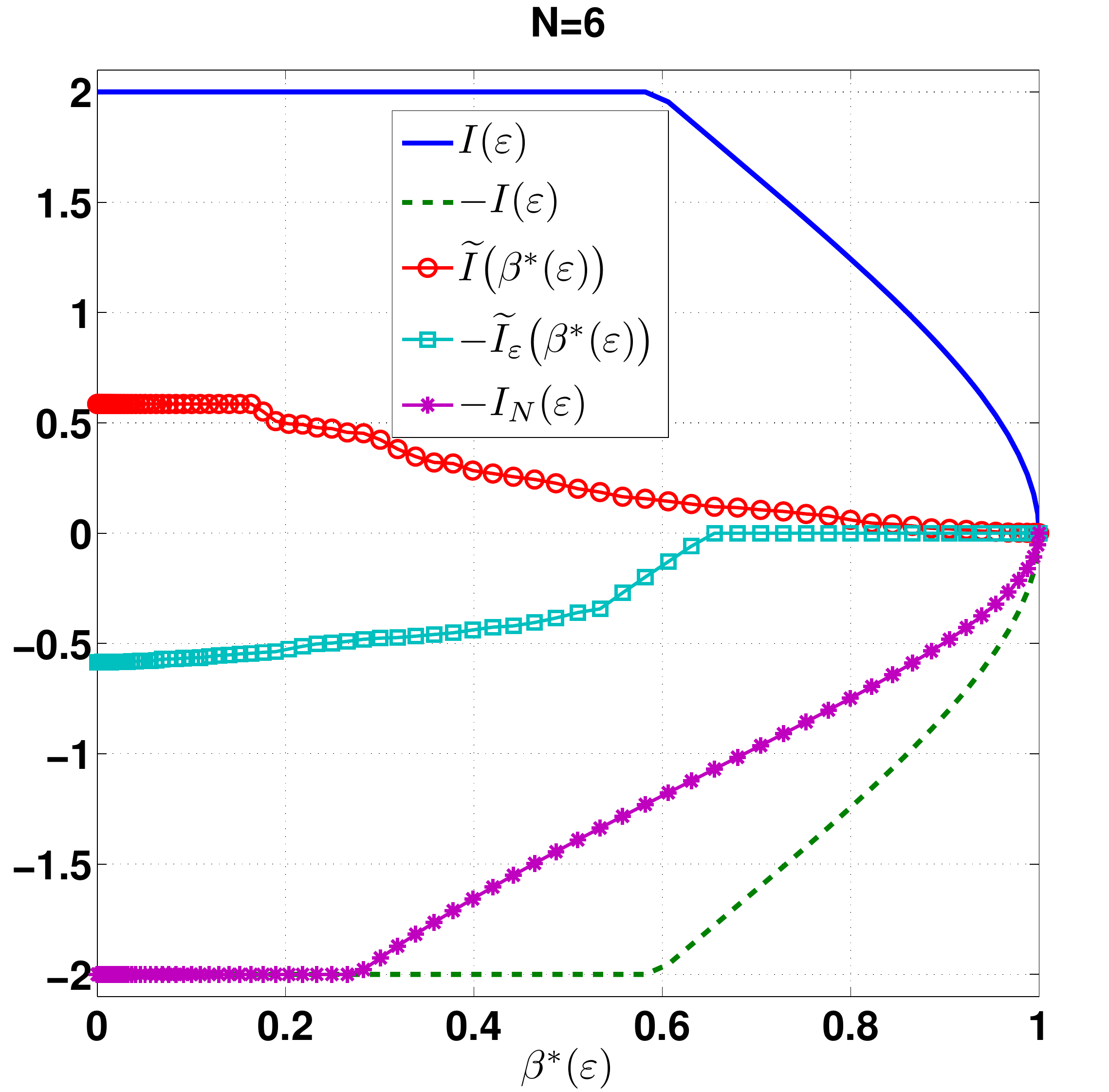}} \qquad
 			\subfigure[Simulations for $N = 60$]{\label{fig:N60}\includegraphics[scale = 0.22]{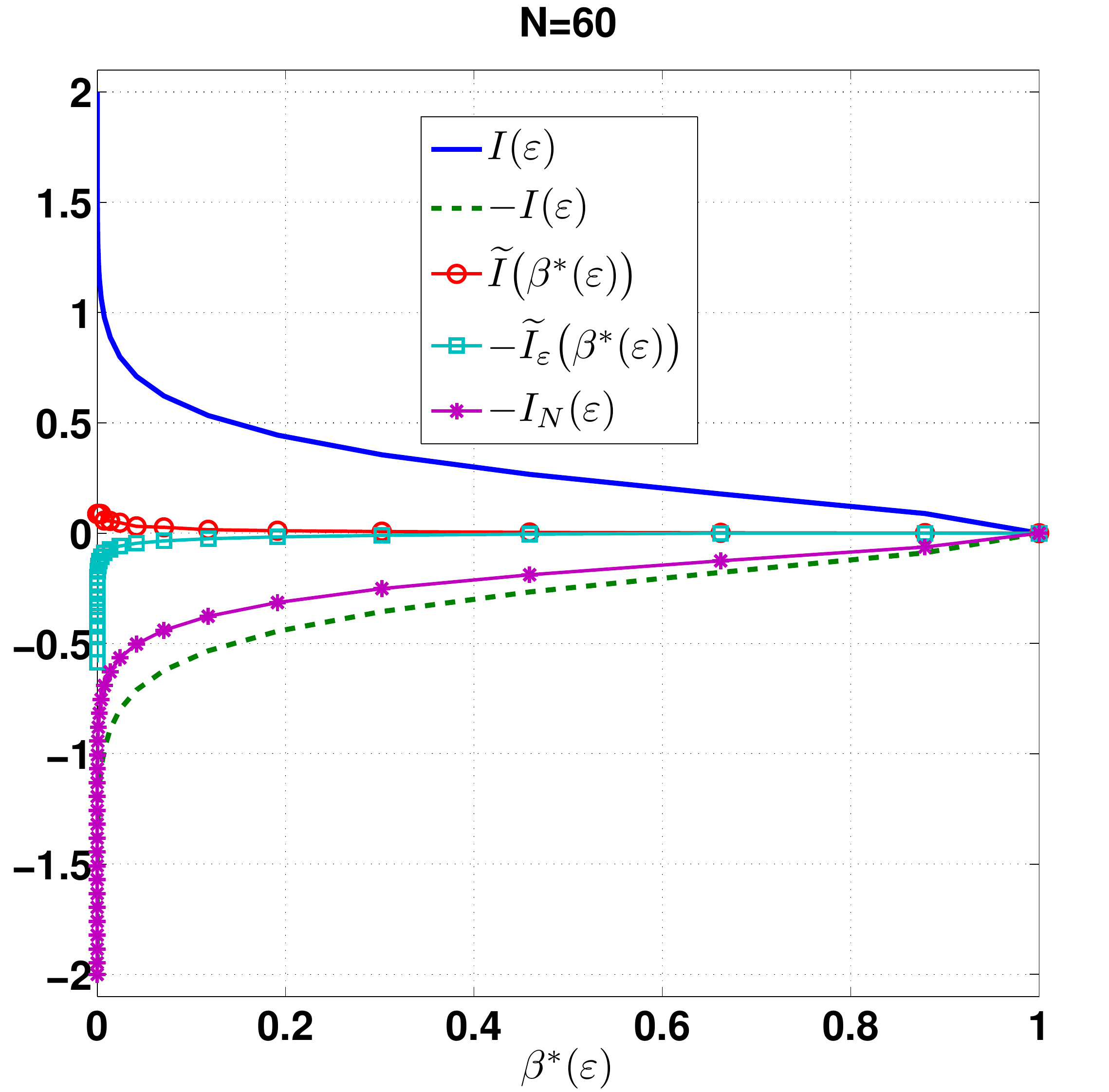}}
 			\caption{Numerical results for Example 1}
 			\label{fig:ex1}
 		\end{figure}
 	
 	The sets $\wt I(\beta)$ and $\wt I_{\eps}(\beta)$ are in close relation with sample quantiles in the sense of \cite[Section 5.3.1]{ref:Shao-03}. In the following simulations the number of experiments is set to $M = 2000$. Figures \ref{fig:N6} and \ref{fig:N60} depict our theoretical performance bound $I(\eps)$ for $N = 6$ and $N = 60$ in comparison with the empirical bounds $\wt I\big(\beta^*(\eps)\big)$ and $\wt I_\eps\big(\beta^*(\eps)\big)$ where $\beta^*(\eps)$ is the confidence level in Figure \ref{fig:EpsBeta}. As our theoretical results suggest, the confidence interval $[0,I(\eps)]$ (resp. $[-I(\eps),0]$) contains the empirical interval $\big[0, \wt I\big(\beta^*(\eps)\big) \big]$ (resp.\ $\big[-\wt I_\eps\big(\beta^*(\eps)\big), 0\big]$). Moreover, to demonstrate the a posteriori confidence interval in Theorem \ref{thm:perf-ccp}, we choose one of the experiments and depict the corresponding confidence interval $ I_N(\eps)$ versus $\beta^*(\eps)$ as well. Note that in both cases of Figure \ref{fig:ex1} the a posteriori confidence interval proposes a tighter bound than the a priori confidence interval. With this observation, we conjecture that in general the dual optimizer of $\SCP$ may happen to be a better approximation in comparison with the constant $\lip$ introduced in \eqref{lip}. 
 	

 \subsection{Example 2: Fault Detection and Isolation}\label{ex:2}
 	The task of fault detection and isolation (FDI) involves generating a diagnosis signal to detect the occurrence of a specific fault. This is typically accomplished by designing a filter with all available signals as inputs (e.g., control signals and given measurements) and a scalar output that implements a non-zero mapping from the fault to the residual while decoupling unknown disturbances. In \cite{ref:Mohajerin_CDC12}, a scalable optimization based approach is proposed to design an FDI filter for a class of nonlinear differential algebraic equation (DAE) where the filter is trained for finite number of disturbance signatures. The class of disturbances is further extended to a probability space in \cite{ref:Mohajerin-FDI-TAC} where the filter performance is quantified in a probabilistic fashion.
 
 	As a particular subclass of DAEs, consider the nonlinear differential equation 
 	\begin{align}\label{ode}
 		\begin{cases}
 		\dot{X}(t) = E\big(X(t)\big)+ AX(t) + B_d d(t) + B_f f(t)\\
 		Y(t) = CX(t)
 		\end{cases},
 	\end{align}
 	where the matrices $A, B_d, B_f, C$ and the function $E(\cdot)$ describe the linear and nonlinear dynamics of the model, respectively. Following \cite{ref:Mohajerin_CDC12,ref:Mohajerin-FDI-TAC}, we restrict the class of filters to linear transfer functions whose residual consists of two terms: $r = G[x](f) + r[x](d)$ where $G[x]$ is a linear time invariant transfer function expressing the mapping from the fault $f(\cdot)$ to the residual, and $r[x](d)$ is the contribution of the unknown disturbance $d(\cdot)$, and $x \in \R^n$ denotes the coefficients of the FDI filter to be designed. For linear systems (i.e., $E \equiv 0$) perfect decoupling between $d$ and $r$ may be possible (i.e., $r[x](d)\equiv 0$ for all $d$). For nonlinear systems, however, may not be the case. In this light, to minimize the impact of nonlinearities and disturbances on the residual, an optimal FDI filter can be obtained by the min-max program 
 		\begin{align}
 		\label{fdi}
 				\left\{ \begin{array}{ll}
 					\min\limits_{x, \gamma}	& \gamma \\
 			\text{s.t. } 	& x\tr Q_d x \le \gamma, \quad \forall d \in \D \\
 							& H x = 0 \\
 							& \big \| F x \big \|_{\infty} \ge 1 \\
 				\end{array} \right.,
 		\end{align}
 	where the quadratic term $x\tr Q_d x$ represents the $\Lnorm$-norm of $r[x](d)$ over a given receding horizon, $\D$ is the space of possible disturbance patterns, and the last (non-convex) constraint is concerned with the norm of $G[x]$ as an operator. The matrices $H$ and $F$ are determined by the linear terms of the system dynamics \eqref{ode}, and the positive semidefinite matrix $Q_d$ reflects the nonlinearity signature of the system dynamics in the presence of a disturbance pattern $d$; it depends on $d$ and the nonlinear term $E(\cdot)$ of \eqref{ode}. We refer interested readers to \cite{ref:Mohajerin-FDI-TAC} for details of the derivation of the above program. 
 	
 	For numerical case study, we consider an application of the above FDI design to detect a cyber intrusion in a two-area power network discussed in \cite{ref:Mohajerin_CDC12}. The setup in this example is a simplified version of \cite[Section IV]{ref:Mohajerin_CDC12} where each power area contains one generator. Thus, the state in \eqref{ode} comprises {\small $ X \Let \big[\Delta \phi, \{ \Delta f_i\}_{1:2}, \{\Delta P_{m_i}\}_{1:2}, \{\Delta P_{{agc}_i}\}_{1:2}\big ]\tr$} where $\Delta \phi$ is the voltage angle difference between the ends of the tie line, $\Delta f_i$ the generator frequency, $\Delta P_{m_i}$ the generated mechanical power, and $\Delta P_{{agc}_i}$ the automatic generation control (AGC) signal in each area.\footnote{The symbol $\Delta$ stands for the deviation from the nominal value.}. The system dynamics is modeled in the framework of \eqref{ode}; the details are provided in Appendix \ref{app-B:model}. The disturbance signal $d(\cdot)$ represents a load deviation that may occur in the first area. The signal $f$ models the intrusion signal in the AGC of the first area, and the measurement signals are the frequencies and output power of the turbines, i.e., {\small $Y = \big[\{ \Delta f_i\}_{1:2}, \{\Delta P_{m_i}\}_{1:2} \big]\tr$}. For a given horizon $T>0$, we consider the class of disturbance signatures 
 	\begin{align*}
 		 \D \Let \bigg\{ d:[0,T] \ra \R ~\Big |~ \exists \alpha \in [0,1], ~ d(t) \Let \sum_{k = 0}^{p}  a_k(\alpha)\cos(\frac{2\pi}{T}kt) \bigg\}, 
 	\end{align*}
 	where $a_k(\alpha)$ are the constant coefficients parametrized by $\alpha$. The choice of $\D$ allows one to exploit available spectrum information of the disturbance signals. In this example, motivated by the emphasis on both low and high frequency regions, we assume { $a_k(\alpha) \Let 5\big(\alpha0.5^{k} + (1-\alpha)0.5^{|10 - k |}\big)$}, $p = 30$, and $T = 4 \ \text{sec}$. For scenario generation, we consider a uniform probability distribution for the parameter $\alpha \in [0,1]$, which in fact induces the probability measure $\PP$ on $\D$. Let $d_0 \in \D$ be a disturbance signature with the corresponding parameter $\alpha_0$. It is straightforward to observe that 
 	\begin{align*}
 		\PP \big[\| d - d_0 \|_{\Lnorm} < r \big] & = \PP \bigg [ \frac{T}{2} \sum_{k = 0}^{p} \big| a_k(\alpha) - a_k(\alpha_0) \big|^2  < r^2 \bigg ] \\
 		& =  \PP \Bigg[ |\alpha - \alpha_0| < \frac{\sqrt{2}\ r}{5\sqrt{T\sum_{k = 0}^{p} \big( 0.5^{k} - 0.5^{|10 - k|}\big)^2}}\Bigg]\\
 		& = \PP \big[ |\alpha - \alpha_0| < 0.142 r \big] \ge 0.142 r =: g(r),
 	\end{align*}
 	where the function $g$, denoted in view of Proposition \ref{prop:h}, is an invertible lower bound for the measure of open balls in $\D$. For the particular set of parameters in this example and specific operating region of interest, one can show that the mapping $d \mapsto Q_d$ is Lipschitz continuous with the constant $L_d = 0.02$; see Appendix \ref{app-B:lip} for more details. By virtue of Proposition \ref{prop:h} and normalizing\footnote{Due to the linearity of the filter operator, one can always normalize the filter coefficients with no performance deterioration \cite{ref:Mohajerin-FDI-TAC}.} the optimizer of the $\SCP$ counterpart of the program \eqref{fdi}, we can introduce the ULB candidate 
 	\begin{align*}
 		h(\eps) \Let L_d\ g^{-1}(\eps) =  0.14 \eps.
 	\end{align*}
 	Notice that the Infinite norm constraint in \eqref{fdi} is in fact a non-convex constraint. However, one may view it as the union of a finite number of constraint sets, see \cite[Remark 3.2]{ref:Mohajerin_CDC12}. Therefore, the optimization problem \eqref{fdi} is already in the framework of $\RP$ as introduced in \eqref{rp-cp} where $m$ is the number of rows in matrix $F$. It is remarkable that $m - 1$ equals the degree of the FDI filter chosen a priori. Thanks to the min-max structure of the robust program \eqref{fdi}, the Lipschitz constant of Lemma \ref{lem:lip} for each subprogram of \eqref{fdi} is $\lip = 1$, see Remark \ref{rem:lip}. 
 	
 	In this example, the dimension of the decision variable $x$ is $n = 55$, the number of rows in $F$ is $m = 5$, and the confidence level is set to $\beta = 0.01$. Therefore, to achieve the confidence interval $I(\eps) = h(\eps) = 5 \times 10^{-4}$, we need to set $\eps = 3.57 \times10^{-3} $ which, due to Theorem \ref{thm:feas-nonconvex}, requires to generate $N$ disturbance signatures $d \in \D$ so that
 	\begin{align*}
 		N \ge \min \bigg\{ N\in \N ~\Big|~ \sum_{i=0}^{n-1}  {N \choose i} \eps^{i}(1 - \eps)^{N-i}\leq \frac{\beta}{m} \bigg\} = 22618.
 	\end{align*}
 		 
 	 \begin{figure}[t!]
 	 	\centering
 	 	\subfigure[Scenarios of the disturbance signatures (solid), and intrusion signal (dash)]{\label{fig:df}\includegraphics[scale = 0.22]{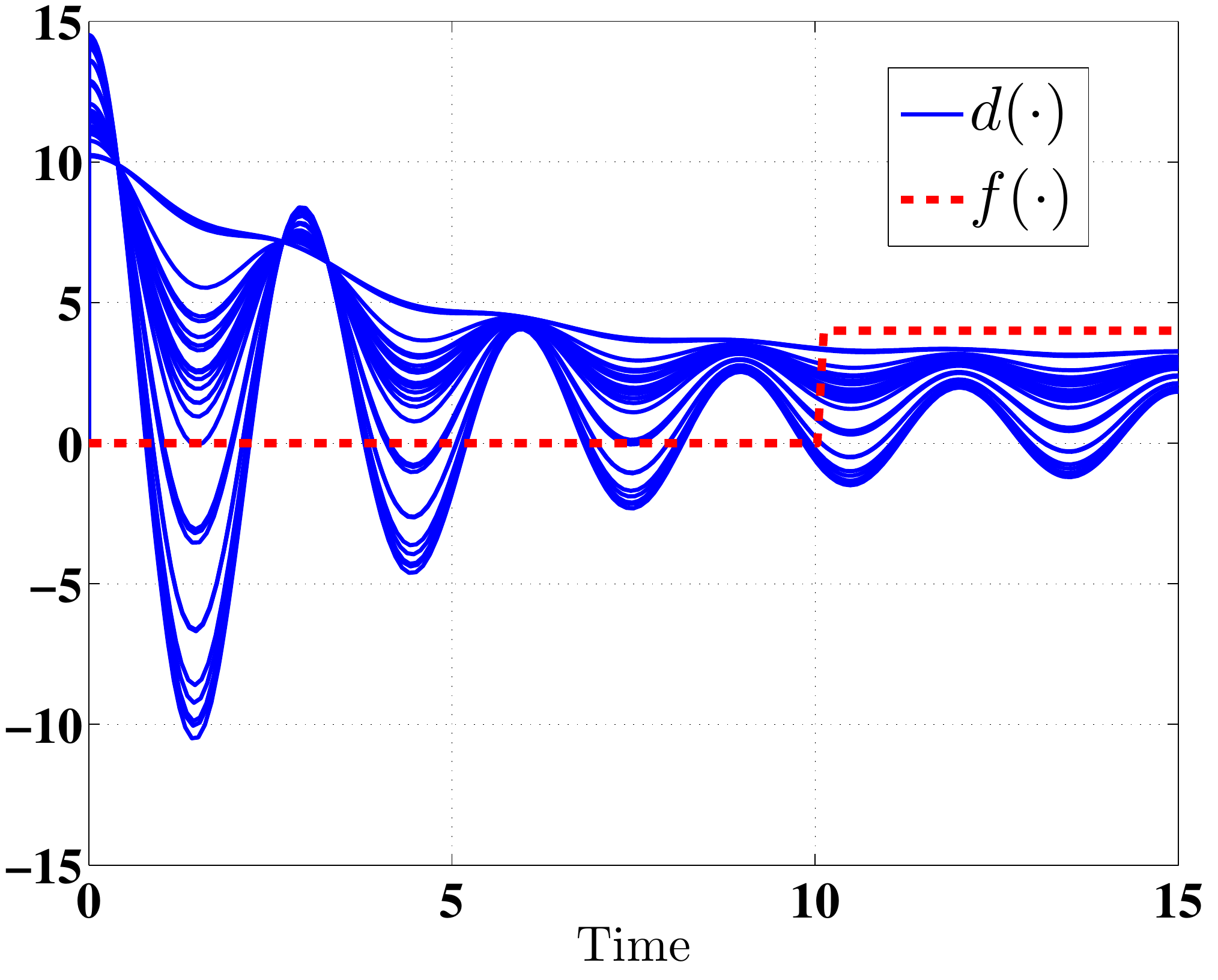}} \quad
 	 	\subfigure[Energy of the filter residual (solid), and the threshold level (dash)]{\label{fig:res}\includegraphics[scale = 0.22]{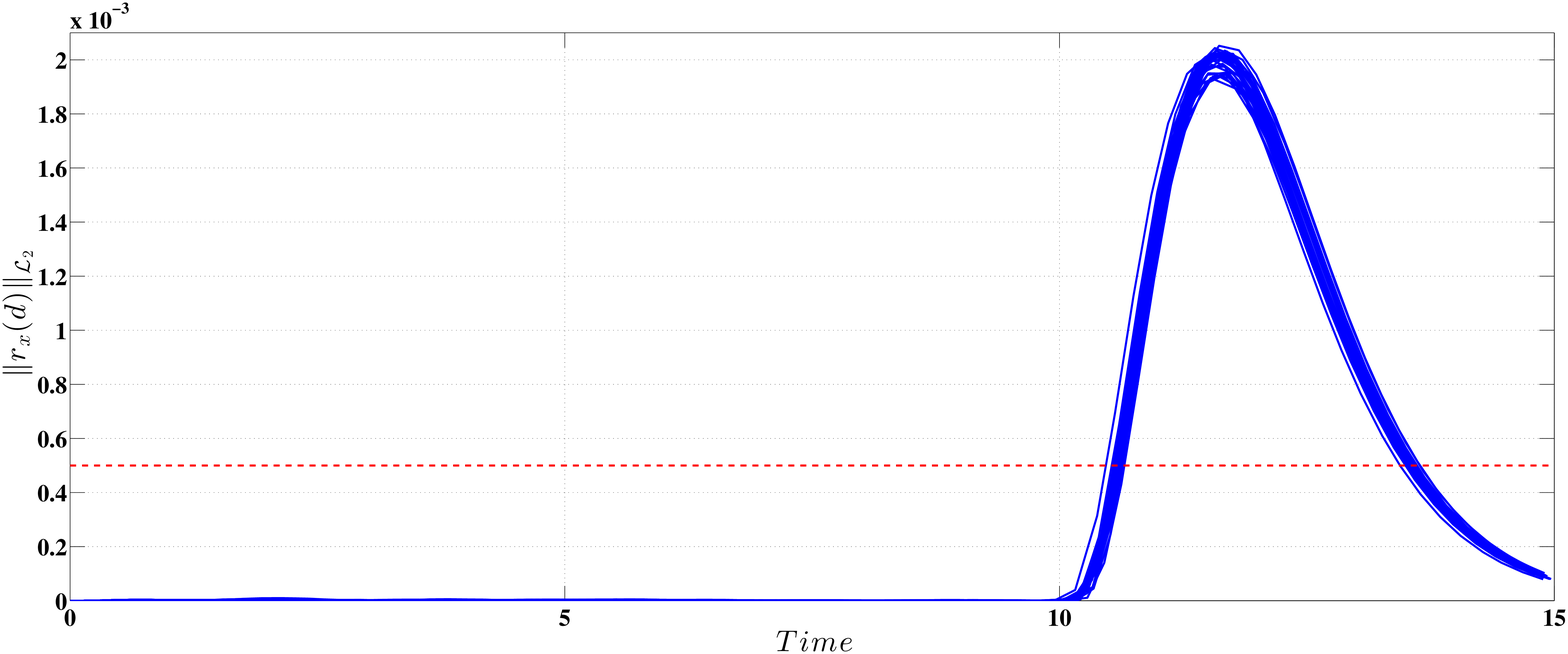}} \quad
 	 	\subfigure[Residual response before the intrusion starts]{\label{fig:res-zoom}\includegraphics[scale = 0.22]{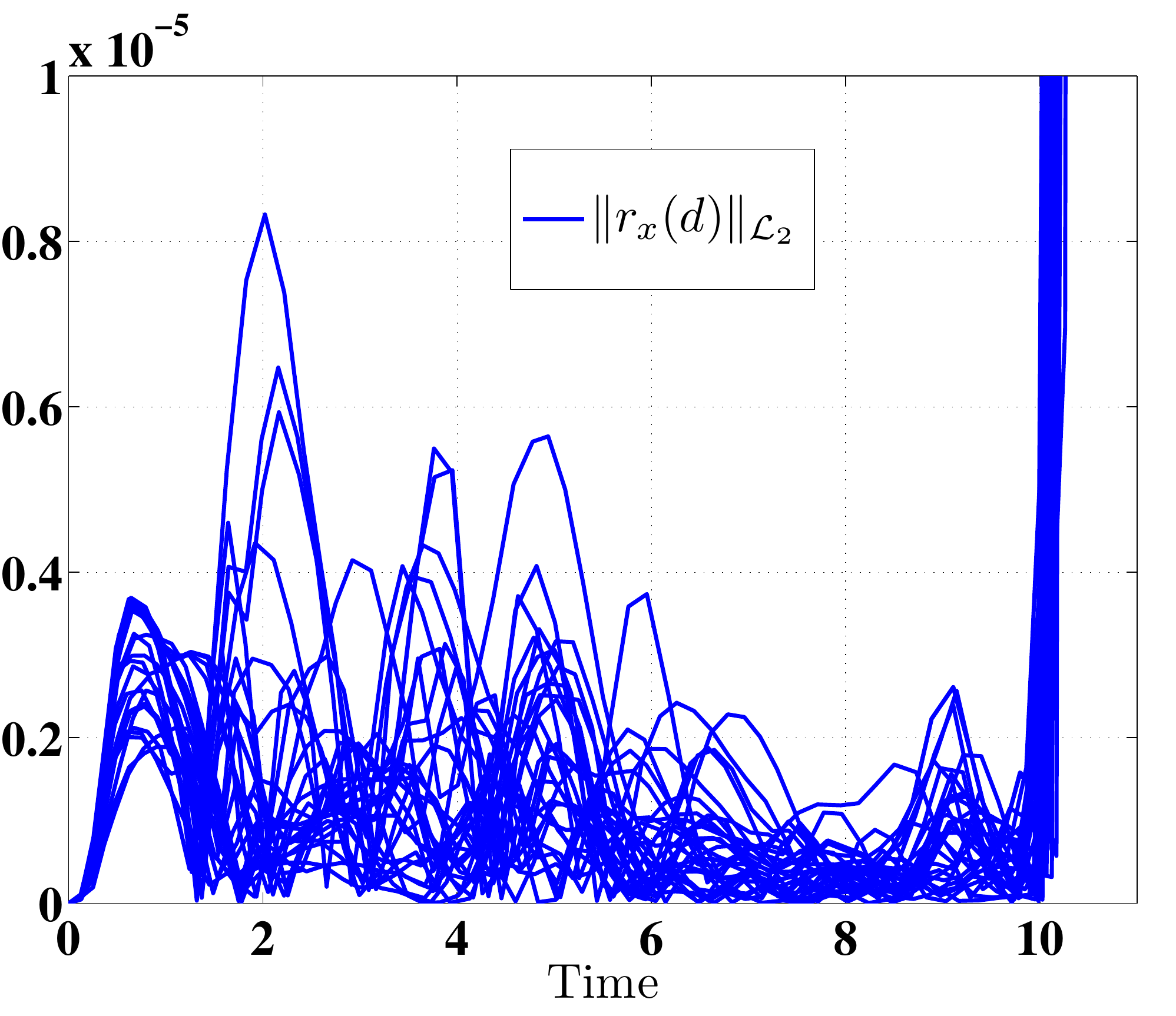}} 
 	 	\caption{Numerical results for Example 2}
 		\label{fig:ex2}
 	 \end{figure}
 	
 	Figures \ref{fig:ex2} demonstrate the numerical results of Example 2 over the course of $15$ seconds. In Figure \ref{fig:df}, 30 different realizations of disturbance inputs as well as an intrusion signal starting from $t = 10$ are shown in solid and dash curves, respectively. Figure \ref{fig:res} depicts the energy of the filter residual for the last $T = 4$ seconds (solid), and the threshold level associated with confidence $\beta = 0.01$ (dash). Notice that the proposed threshold is $\gamma^\star + 0.0005$, where $\gamma^\star$ is the optimal solution of the random counterpart of the program \eqref{fdi} with N = 22618 scenarios. Figure \ref{fig:res-zoom} presents the filter response which is the same figure as \ref{fig:res} but zoomed in on the period prior to the intrusion. 
 	 

\section{Conclusion and Future Direction} \label{sec:conclusion}
 In this article we presented probabilistic performance bounds for both $\RCP$ and $\CCP$ via $\SCP$. The proposed bounds are based on considering the tail probability of the worst-case constraint violation of the $\SCP$ solution as introduced in \cite{ref:KanTak} together with some classical results from perturbation theory of convex optimization. In contrast to earlier approaches, this methodology is, to the best of our knowledge, the first confidence bounds for the objective performance of RCPs and CCPs based on scenario programs. Subsequently, we extended our results to a certain class of non-convex programs allowing for binary decision variables.
 
 For future work, in light of Theorems \ref{thm:perf-rcp} and \ref{thm:perf-ccp}, we aim to study the derivation of ULBs as introduced in Definition \ref{def:tail}. Meaningful ULBs may depend highly on the individual structure of the optimization problems, in particular the uncertainty set and the constraint functions. Another potential direction, as highlighted by Example 1 in Section \ref{ex:1}, is to investigate the relation between the constant $\lip$ in \eqref{lip} and the dual optimizers of the program $\SCP$. 
 
%
 \renewcommand{\thesection}{A}
 \section{Appendix: Technical Proofs} \label{app-A}
 \setcounter{equation}{0}
 \numberwithin{equation}{section}
 
 
 \begin{proof}[Proof of Lemma \ref{lem:feas}]
 	Let $h$ be a ULB as introduced in Definition \ref{def:tail}, $x_0 \sat \CCP$, and $f^*(x_0) \Let \sup_{v \in \D}f(x_0,v)$. By definition of $\CCP$ and $p$, the tail probability of the worst-case violation, we have
 	\begin{align*}
 		p\big(x_0,f^*(x_0)\big) \le \eps ~\Lra~ \inf_{x \in \X} p\big(x,f^*(x_0)\big) \le \eps~\Lra~ f^*(x_0) \le h(\eps) ~\Lra~ x_0 \sat \RCP_{h(\eps)} & \qedhere
 	\end{align*}
 \end{proof}
 
 \begin{proof}[Proof of Proposition \ref{prop:h}]
 	Given $x \in \X$, let $(v_i)_{i \in \N}$ be a sequence in $\D$ so that 
 	$$\limsup_{i\in \N}f(x,v_i) = \sup_{v \in \D} f(x,v).$$
 	Thus, in light of Definition \ref{def:tail} we have
 	\begin{align}
 		\notag p(x,\delta) &= \PP\Big[ \sup_{v \in \D} f(x,v) - f(x,d) < \delta \Big] = \PP\Big[ \limsup_{i\in \N}f(x,v_i) - f(x,d) < \delta \Big] \\
 		& \label{e:1}\ge \PP\Big[ \limsup_{i\in \N} L_d \| v_i - d \| < \delta \Big] \ge \limsup_{i \in \N} \PP \bigg[ \| v_i - d \| < \frac{\delta}{L_d} \bigg] \\
 		\notag & = \limsup_{i \in \N} \PP \Big[ \ball{v_i}{\frac{\delta}{L_d}} \Big] \ge g\big( \frac{\delta}{L_d} \big),
 	\end{align}
 	where the first inequality in \eqref{e:1} follows from the Lipschitz continuity of $f$ with respect to $d$, and the second inequality in \eqref{e:1} is due to Fatou's lemma~\cite[p.~23]{ref:Rudin-87}. Hence, in view of the ULB definition and  the above analysis, we arrive at
 	\begin{align*}
 		\sup \Big \{ \delta \in \R_+ ~\big|~ \inf_{x \in \X} p(x,\delta) \le \eps \Big \} \le \sup \Big \{ \delta \in \R_+ ~\big|~ g\big( \frac{\delta}{L_d} \big) \le \eps \Big \} = L_d g^{-1}(\eps). &\qedhere
 	\end{align*}
 \end{proof}

 \begin{proof}[Proof of Lemma \ref{lem:lip}]
	It is well-known that under the strong duality condition the mapping $\gamma \mapsto \RCP_{\gamma}$, the so-called perturbation function, is Lipschitz continuous with the constant $\|\lambda^\star\|_1$ where $\lambda^\star$ is a dual optimizer of the program $\RCP$; see \cite[p.\ 250]{ref:Boyd-04} for the proof and \cite[Section~28]{ref:Rockafellar-97} for more details in this direction. Now Lemma \ref{lem:lip} follows from \cite[Lemma 1]{ref:Nedic-08}, which essentially implies $\|\lambda^\star\|_1 \le \lip$ where $\lip$ is the constant \eqref{lip} corresponding to any Slater point in the sense of Assumption \ref{a:duality}. 
 \end{proof}
 	
 	To prove Proposition \ref{prop:meas}, we need some preliminaries. 	
 	
 	
 	\begin{Lem}
 	 \label{lem:lsc}
 	 	Let $\C$ be the set of all lower semicontinuous functions from $\X \subset \R^n$ to $\R$. Consider the mapping $J : \Lcon \ra \R$ defined by the optimization program
 	 	\begin{align}
 	 	 \label{J}
 	 		\left\{ \begin{array}{lll}
 	 		  J(g) \Let &\min\limits_{x}					& c\tr x \\
 	 			&\emph{s.t. } 			& g(x) \le 0\\
 	 			&						& x\in \X 
 	 				\end{array}
 	 		\right..
 		\end{align}
 		Then, the function $J$ is measurable where the space of $\C$ is endowed with the infinite norm and the respective Borel $\sigma$-algebra.\footnote{Under assumptions of Section \ref{sec:problem}, one can show a stronger assertion that the mapping $g \mapsto J(g)$ is indeed lower semicontinuous; see for instance \cite[Theorem 4.3.2, p.\ 67]{ref:Klatte-82}. Thanks to a personal communication with Diethard Klatte, it turns out that the statement can be even extended to continuity if Assumption \ref{a:duality} also holds.} 
 	\end{Lem}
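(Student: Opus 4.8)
The plan is to prove the stronger property that $J$ is lower semicontinuous on $(\Lcon, \|\cdot\|_\infty)$, from which Borel measurability is immediate: the sublevel sets $\{g \in \Lcon : J(g) \le \alpha\}$ of a lower semicontinuous function are closed, hence Borel, for every $\alpha \in \R$. (Here $J$ is naturally extended-real-valued, with $J(g)=+\infty$ on infeasible $g$; measurability into $\borel(\overline{\R})$ restricts to a measurable real-valued map on $\{J<+\infty\}$.) The entire burden therefore reduces to a stability argument for the parametric program \eqref{J}.

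First I would record two consequences of the standing compactness of $\X$ together with the lower semicontinuity of the admissible constraint functions. For any $g \in \Lcon$ the feasible set $\{x \in \X : g(x) \le 0\}$ is the intersection of the compact $\X$ with the closed sublevel set $\{g \le 0\}$, hence compact; therefore, whenever it is nonempty, the continuous objective $x \mapsto c\tr x$ attains its minimum and $J(g)$ is a genuine minimum. Moreover infeasibility is robust: if $\{x \in \X : g(x) \le 0\} = \emptyset$, then $\min_{x \in \X} g(x) \teL \eta > 0$ (the minimum is attained because $g$ is lower semicontinuous on a compact set), so every $g'$ with $\|g' - g\|_\infty < \eta$ is again infeasible and $J \equiv +\infty$ on a neighbourhood of $g$, where lower semicontinuity holds trivially.

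The core step is lower semicontinuity at a point $g$ with $J(g) < +\infty$. I would take an arbitrary sequence $g_n \to g$ in $\|\cdot\|_\infty$, pass to a subsequence along which $J(g_n) \to \liminf_n J(g_n)$ (if this value is $+\infty$ there is nothing to prove), and discard the infeasible indices, for which $J(g_n) = +\infty$ and which only increase the liminf. For each remaining index let $x_n$ be a minimizer of \eqref{J} for $g_n$, so $c\tr x_n = J(g_n)$, $x_n \in \X$, and $g_n(x_n) \le 0$; by compactness of $\X$ extract a further subsequence $x_{n_k} \to \bar x \in \X$. The key inequality chain is
\begin{align*}
	g(\bar x) \le \liminf_{k} g(x_{n_k}) \le \liminf_{k}\big( g_{n_k}(x_{n_k}) + \|g - g_{n_k}\|_\infty \big) \le 0,
\end{align*}
where the first inequality is the definition of lower semicontinuity of $g$ at $\bar x$, the second uses the pointwise bound $g \le g_{n_k} + \|g - g_{n_k}\|_\infty$, and the last combines $g_{n_k}(x_{n_k}) \le 0$ with $\|g - g_{n_k}\|_\infty \to 0$. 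Hence $\bar x$ is feasible for $g$, so $J(g) \le c\tr \bar x = \lim_k c\tr x_{n_k} = \liminf_n J(g_n)$, which is precisely lower semicontinuity.

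I expect the main obstacle to be bookkeeping rather than conceptual: one must carefully treat the three regimes (feasible limit, robustly infeasible limit, and subsequences that switch feasibility status) so that the single inequality chain above is justified in each, and check that passing to subsequences is legitimate when the target quantity is a $\liminf$. It is worth emphasizing that Assumption \ref{a:duality} is \emph{not} needed for this lemma; it would only be required to upgrade lower semicontinuity to full continuity (as noted in the footnote), since upper semicontinuity can fail at constraint-boundary points unless a Slater-type condition allows one to perturb an optimizer of $g$ into the feasible set of the nearby $g_n$ at comparable cost.
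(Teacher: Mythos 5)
Your proof is correct, but it takes a genuinely different route from the paper. The paper proves only measurability, and does so by invoking the theory of normal integrands: it defines the set-valued map $S(g)=\{(x,\alpha)\in\X\times\R : g(x)\le 0,\ c\tr x\le\alpha\}$, checks that $S$ is closed-valued (from lower semicontinuity of $g$) and measurable (by showing $S^{-1}(O)$ is open for open $O$), and then reads off the measurability of $J$ from Rockafellar--Wets, Theorem 14.37. You instead prove the strictly stronger statement that $J$ is lower semicontinuous on $(\Lcon,\|\cdot\|_\infty)$ --- precisely the assertion the paper relegates to a footnote with a citation to Klatte --- by a direct, self-contained stability argument: compactness of $\X$ gives attainment and a convergent subsequence of minimizers $x_{n_k}\to\bar x$, and the chain $g(\bar x)\le\liminf_k g(x_{n_k})\le\liminf_k\bigl(g_{n_k}(x_{n_k})+\|g-g_{n_k}\|_\infty\bigr)\le 0$ shows $\bar x$ is feasible, whence $J(g)\le\liminf_n J(g_n)$; your handling of the infeasible regime via $\eta\Let\min_{\X}g>0$ is also sound. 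The trade-off: your argument is more elementary, needs no external machinery, and delivers a sharper conclusion about $J$; the paper's normal-integrand setup is heavier but is reused verbatim in the subsequent lemma to extract a \emph{measurable selection of the optimizer} (the map $g\mapsto\wt x^\star(g)$), which your sequential argument does not by itself provide. One minor point worth flagging in either approach: lower semicontinuous functions on $\X$ need not be bounded above, so $\|\cdot\|_\infty$ is only an extended metric on $\Lcon$; this is harmless (the induced topology is still first countable, so sequential lower semicontinuity suffices), and the paper's proof has the same feature.
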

 
	\begin{proof}
		The proof is an application of \cite[Theorem 14.37, p.\ 664]{ref:Rockafellar-10}. Let us define the set-valued mapping $S:\C \rra \X \times \R$ as follows:	 	
		 	 	\begin{align*}
		 	 		S(g)\Let \big\{(x,\alpha) \in \X \times \R ~\big|~ \{g(x) \le 0\} ~ \& ~ \{c\tr x \le \alpha\} \big\}. 
		 	 	\end{align*}
 	 	We first show that $S$ is a normal integrand in the sense of \cite[Definition 14.27, p.\ 661]{ref:Rockafellar-10}. Since $g$ is lower semicontinuous, then $S$ is clearly closed-valued. We then only need to show that $S$ is measurable according to \cite[Definition 14.1, p.\ 643]{ref:Rockafellar-10}. Let $O \subset \X \times \R$ be an open set, $(x_0,\alpha_0) \in O$ and $g_0 \in S^{-1}(x_0,\alpha_0)$. Observe that for sufficiently small $\eps>0$ we have $\ball{g_0}{\eps} \subset S^{-1}(O)$ where $\ball{g_0}{\eps}\Let \{g \in \C ~|~ \sup_{x \in \X} \|g(x) - g_0(x) \| \le \eps\}$, that implies that $S^{-1}(O)$ is open and in particular measurable. Thereby, $S$ is measurable and hence a normal integral. Now the desired measurability readily follows from \cite[Theorem 14.37, p.\ 664]{ref:Rockafellar-10}.
 	 \end{proof}
 
%
 	\begin{Lem}
 	 \label{lem:meas}
 	 	Let $\phi : \R^n \ra \R$ be a strictly convex function, and $\wt J : \Lcon \ra \R$ defined as follows:
 		 	\begin{align}
 	 		\label{J-tilde}
 		 	 	\left\{ \begin{array}{lll}
 	 			  \wt J(g) \Let &\min\limits_{x}		& \phi(x) \\
 	 							&\emph{s.t. } 			& g(x) \le 0\\
 	 							&			& c\tr x \le J(g)\\ 
 	 							&						& x\in \X 
 	 					\end{array} \right.	,		
 	 		\end{align}
 	 	where $J(g)$ is the function introduced in \eqref{J}. Let $\wt x^\star(g)$ denote the set of optimizers of the program \eqref{J-tilde}. Then, the mapping $\Lcon \ni g \mapsto \wt x^\star \in \R^d$ is a measurable singleton.
 	\end{Lem}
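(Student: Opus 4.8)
The plan is to separate the two assertions — that $\wt x^\star(g)$ is a single point for each $g$, and that the induced single-valued map $g\mapsto\wt x^\star(g)$ is measurable — and to obtain measurability by recasting \eqref{J-tilde} as the minimization of a \emph{normal integrand}, exactly as in the proof of Lemma \ref{lem:lsc}, now using the measurability of $J$ already established there.

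First I would settle the singleton claim. Observe that the feasible set of \eqref{J-tilde}, namely $F(g)\Let\{x\in\X : g(x)\le 0,\ c\tr x\le J(g)\}$, is precisely the optimizer set of \eqref{J}: on the feasible set of \eqref{J} one always has $c\tr x\ge J(g)$, so the added constraint $c\tr x\le J(g)$ forces $c\tr x=J(g)$. Since $\X$ is compact and $g$ is lower semicontinuous, $F(g)$ is a nonempty compact set, and the continuous strictly convex $\phi$ therefore attains its minimum over $F(g)$. Because $F(g)$ is also convex — in the scenario setting the constraint is $g(x)=\max_{i\le N} f(x,d_i)$, a convex function, so $\{g\le 0\}$ is a convex sublevel set and $F(g)$ is its intersection with the supporting hyperplane $\{c\tr x=J(g)\}$ — strict convexity of $\phi$ excludes two distinct minimizers, giving $|\wt x^\star(g)|=1$.

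For measurability I would write the program \eqref{J-tilde} as $\wt J(g)=\inf_{x}\Psi(g,x)$ with
\[
\Psi(g,x)\Let \phi(x)+\iota_{\X}(x)+\iota_{(-\infty,0]}\big(g(x)\big)+\iota_{(-\infty,0]}\big(c\tr x-J(g)\big),
\]
where $\iota_S$ denotes the indicator that is $0$ on $S$ and $+\infty$ otherwise, so that $\wt x^\star(g)=\arg\min_{x}\Psi(g,x)$. The term $\phi+\iota_{\X}$ does not depend on $g$ and is lower semicontinuous, hence a normal integrand; the term $\iota_{(-\infty,0]}(g(x))$ is handled exactly as the constraint $g(x)\le 0$ in the proof of Lemma \ref{lem:lsc}. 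It then remains to show that the last term is a normal integrand: here $x\mapsto c\tr x-J(g)$ is affine (continuous) while $g\mapsto c\tr x-J(g)$ is measurable by Lemma \ref{lem:lsc}, so the associated half-space map $g\mapsto\{x:c\tr x\le J(g)\}$ is closed- and measurable-valued, and its indicator is a normal integrand. A finite sum of normal integrands is again a normal integrand, so $\Psi$ is one, and \cite[Theorem 14.37, p.\ 664]{ref:Rockafellar-10} yields at once that $\wt J$ is measurable and that $g\mapsto\arg\min_{x}\Psi(g,x)$ is a measurable closed-valued map admitting a measurable selection. Since this argmin is the singleton $\wt x^\star(g)$ by the first step, the selection coincides with $\wt x^\star$, which is therefore measurable.

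The main obstacle is precisely the constraint $c\tr x\le J(g)$. Unlike the constraints treated in Lemma \ref{lem:lsc}, the dependence $g\mapsto J(g)$ is only measurable, not continuous, so the open-preimage argument used there is unavailable and I must instead invoke the stability of normal integrands under finite sums together with the fact — guaranteed by Lemma \ref{lem:lsc} — that $J$ is measurable. Verifying carefully that this measurable (but discontinuous) coupling still produces a normal integrand, so that the measurable-selection conclusion of \cite[Theorem 14.37]{ref:Rockafellar-10} applies, is the delicate point of the argument.
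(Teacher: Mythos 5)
Your proof is correct and follows essentially the same route as the paper: the set-valued map the paper constructs, $S(g)=\{(x,\alpha): g(x)\le 0,\ c\tr x - J(g)\le 0,\ \phi(x)\le\alpha\}$, is precisely the epigraph of your integrand $\Psi(g,\cdot)$, so verifying that $\Psi$ is a normal integrand via a sum of indicator integrands is the same step the paper performs by showing $S$ is closed-valued and measurable, and both arguments then invoke \cite[Theorem 14.37, p.\ 664]{ref:Rockafellar-10} together with the measurability of $J$ from Lemma \ref{lem:lsc} and the strict convexity of $\phi$ for uniqueness. Your explicit justification of the convexity of the feasible set of \eqref{J-tilde} (which the singleton claim genuinely requires, and which holds in the scenario setting where $g=\max_{i\le N}f(\cdot,d_i)$ is convex) is a welcome detail that the paper leaves implicit.
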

 	
 	
 	\begin{proof}
		Let us define the set-valued mapping $S:\Lcon \rra \X \times \R$ 
 	 	\begin{align*}
 	 		S(g)\Let \big\{(x,\alpha) \in \X \times \R ~\big|~ \{g(x) \le 0\} ~ \& ~ \{c\tr x -J(g) \le 0\} ~\&~ \{\phi(x) \le \alpha\}  \big\}. 
 	 	\end{align*}
 	 	By virtue of the measurability of the mapping $g \mapsto J(g)$ in Lemma \ref{lem:lsc} and along the same line of its proof, we know that $S$ is a normal integral. Now, by \cite[Theorem 14.37, p.\ 664]{ref:Rockafellar-10} the existence of a measurable selection for the optimizer $\wt{x}^\star(g)$ as a function of $g \in \C$ is guaranteed. On the other hand, since $\phi: \R^n \ra \R$ is strictly convex, the minimizer of the program \eqref{J-tilde} is unique. Therefore, $\wt{x}^\star(g)$ is a singleton and the desired measurability property follows at once. 
 	\end{proof}
 	
 	We now have all the required results to prove Proposition \ref{prop:meas}:	
 	
 	\begin{proof}[Proof of Proposition \ref{prop:meas}]
 		Let $g : \D^N \ra \Lcon$ defined as 
 		 \begin{align}
 		 \label{g}
 		 	g(d_1,\cdots,d_N ) \Let \max_{i \in \{1,\cdots, N\}} f(x,d_i).
 		 \end{align}
 		The measurability of the mapping \eqref{g} is ensured by the measurability assumption of the mapping $d \mapsto f(x,d)$ for each $x$. It is straightforward to observe that the optimizer of the program \eqref{scp-2} can be viewed as the composition $\wt{x}^\star_N = \wt x^\star\circ g(d_1,\cdots,d_N )$ where $\wt x^\star$ is the optimizer of the program \eqref{J-tilde} and $g$ is defined as in \eqref{g}. Hence, the desired implication follows directly from the measurability of the mapping \eqref{g} and Lemma \ref{lem:meas}. 
 	\end{proof}
 
 \renewcommand{\thesection}{B}
 \section{Appendix: Details of Example 2}\label{app-B}
 
 	
 	\subsection{Mathematical model description}
 	\label{app-B:model}
 	The two-area power network is described by the set of nonlinear ordinary differential equations
 	{ \begin{align*}
 		\Delta \dot{\phi} &= 2\pi (\Delta f_1 - \Delta f_2), \\
 		\Delta \dot{f}_i & = \frac{f_0}{2H_iS_{B_i}} \Big( - \frac{1}{D_{i}}\Delta f_i - P_{T}\sin\Delta\phi +  \Delta P_{m_i} - \Delta P_{load_i} \Big), \\
 		\Delta \dot{P}_{m_i} &= \frac{1}{T_{ch_i}}\Big(  - \frac{1}{S_i} \Delta f_i - \Delta P_{m_i} + \Delta P_{agc_i} \Big),\\
 		\Delta \dot{P}_{agc_i} &= \Big(\frac{1}{D_{i}} \frac{C_{i}f_0}{2S_i H_i S_{B_i}} - \frac{1}{S_i}\frac{1}{T_{N_i}} \Big) \Delta f_i  \\
 		& \qquad - \frac{C_{i}f_0}{2S_i H_i S_{B_i}} \big( \Delta P_{m_i} - \Delta P_{load_i} \big) - \frac{C_{i}f_0}{2S_i H_i S_{B_i}} \Delta P_{agc_i}  \\
 		&\qquad - \Big( \frac{1}{T_{N_i}} -\frac{C_{i}f_0}{2S_i H_i S_{B_i}} \Big) P_{T}\sin\Delta\phi - 2\pi C_{i} P_{T} (\Delta f_1 - \Delta f_2) \cos\Delta\phi,  
 	\end{align*} }
 	where $i \in \{1,2\}$ is the index of each area, {\small $ X \Let \big[\Delta \phi, \{ \Delta f_i\}_{1:2}, \{\Delta P_{m_i}\}_{1:2}, \{\Delta P_{{agc}_i}\}_{1:2}\big ]\tr \in \R^7$} is the state vector, and the constant parameters in this example are chosen the same for both areas as {\small $T_{ch_i} = 5 \ \text{sec}, S_{B_i} = 1.8 \ \text{GW}$, $f_0 = 50 \ \text{Hz}$, $H_i = 6.5 \ \text{sec}$, $D_i = 428.6 \ \text{Hz/GW}$, $S_i = 1.389 \ \text{Hz/GW}$, $C_i = 0.1$, $T_{N_i} = 30$, $P_T = 0.15 \ \text{GW}$}. We refer to \cite{ref:Mohajerin_ACC10} for physical interpretation of these parameters and more details on the model equations. In the example, we assume that $\Delta P_{load_1} = d$ where $d \in \D$ is the disturbance signal and $\Delta P_{load_2} \equiv 0$.
 	
 	
 	\subsection{Lipschitz constant of the mapping $d \mapsto Q_d$} 
 	\label{app-B:lip}
 	This mapping can be viewed in two steps: $d \mapsto E(X)$ and $E(X) \mapsto Q_d$ where $X$ is the solution process in the presence of the disturbance input $d$, and $E$ the nonlinear term of the ODE \eqref{ode}. The key step is to approximate the Lipschitz constant of the first mapping $d \mapsto E(X)$. The classical result of the continuity of the ODEs solution, obtained by Lipschitz continuity of the vector field and Gronwall's inequality, turns out to be too conservative in this case. We then invoke a Lyapunov-like approach to address this issue more efficiently. Let us define the shorthand $h(X,d) \Let E(X) + AX + B_d d$. Suppose there exist a function $V: \R^7 \times \R^7 \ra \R_+$ and positive constants $\kappa, \rho$ so that for every $X, \wt X \in \R^7$ and $d, \wt d \in \R$
 	{\begin{subequations}
 	\label{V}
 	\begin{align}
 		\label{V-a1}\big \| E(X) - E(\wt X) \big\|^2 & \le V(X, \wt X) \\
 		\label{V-a2}\partial_X V(X, \wt X) h(X, d) + \partial_{\wt X} V(X, \wt X) h(\wt X ,\wt d) &\le -\kappa V(X, \wt X) + \rho \big| d - \wt d \big|.
 	\end{align}
 	\end{subequations}}
 	Using standard Gronwall's inequality, one can show that under conditions \eqref{V} we have 
 	{\begin{align*}
 		 \big \| E(X) - E(\wt X) \big\|_{\Lnorm}^2 & \le \int\limits_{0}^{T} \big \| E\big(X(t)\big) - E\big(\wt X(t)\big) \big\|^2 \diff t \le \int\limits_{0}^{T} V\big(X(t), \wt X(t)\big) \diff t \\
 		& \le  \rho \int\limits_{0}^{T} \e^{-\kappa t} \int\limits_{0}^{t} \big| d(s) - \wt d(s)\big| \diff s  \diff t \le \frac{2 \rho}{3}T\sqrt{T} \| d - \wt d\|_{\Lnorm}. 
 	\end{align*} }
 	In \cite[Theorem 3.3]{ref:ZamMoh-TAC-13}, a similar technique is discussed in more detail to establish a connection between the Lyapunov function and continuity of the solution trajectories. In order to find a Lyapunov function in the above sense, we limit our search domain to the quadratic functions, i.e., {\small $V(X, \wt X) = (X - \wt X)\tr Q (X - \wt X)$} for some positive semidefinite matrix $Q$. It is not difficult to deduce that the nonlinear term $E$ effectively depends only on the state $\Delta \phi$. Hence, to fulfill the requirement \eqref{V-a1} it suffices to guarantee $Q \succeq v v\tr$ where $v = [0, 0, 1, 0, 0, 0, 0]\tr$. Setting $\kappa = 0.01$, we then solve the set of linear matrix inequalities (LMIs)
 	\begin{align*}
 	\left\{ \begin{array}{lll} \vspace{0.2mm}
 		 \min\limits_{\sigma, Q}	& \sigma \\
 		 			\text{s.t. } & Q A\tr + A Q \preceq -\kappa Q\\
 		 			& v v\tr \preceq Q \preceq \sigma I\\
 		\end{array} \right.,
 	\end{align*}
 	 which provides a local Lyapunov function in the sense of \eqref{V}. Note that one can always extract the linear part of $E$ and add it to the matrix A. Now, by numerical inspection, it turns out that for the specific system parameters of this example, $V$ obtained from the above LMIs is a Lyapunov function in the domain of {\small $\Delta f_i \in [-0.1, 0.1]\ \text{Hz}$, $\Delta \phi \in [-10^\circ, 10^\circ]$, $\Delta p_{m_i} \in [-10, 10]\ \text{MW}$, $\Delta p_{agc_i} \in [-15, 15]\ \text{MW}$}. Therefore, the parameter $\rho$ in \eqref{V-a2} can be numerically approximated via the optimal $\sigma$ in the LMIs together with matrix $B_d$ and the region of interest described above. Besides, since the FDI filter is a stable linear time invariant transfer function with normalized coefficients, the Lipschitz constant of the second mapping $E(X) \mapsto Q_d$ can be explicitly computed based on the filter denominator which is fixed prior to the design procedure; see \cite[Lemma 4.5]{ref:Mohajerin-FDI-TAC}. 

	\bibliographystyle{alpha}

	\bibliography{ref_FDI,ref_mine,ref_SOC}

\newcommand{\etalchar}[1]{$^{#1}$}
\begin{thebibliography}{MEVM{\etalchar{+}}10}

\bibitem[ATC09]{ref:Alamo-09}
Teodoro Alamo, Roberto Tempo, and Eduardo~F. Camacho.
\newblock Randomized strategies for probabilistic solutions of uncertain
  feasibility and optimization problems.
\newblock {\em IEEE Trans. Automat. Control}, 54(11):2545--2559, 2009.

\bibitem[BGFB94]{ref:Boyd-94}
Stephen Boyd, Laurent~El Ghaoul, Eric Feron, and Venkataramanan Balakrishnan.
\newblock {\em Linear Matrix Inequalities in System and Control Theory}.
\newblock Society for Industrial and Applied Mathematics: SIAM studies in
  applied mathematics. Society for Industrial and Applied Mathematics, 1994.

\bibitem[BGK{\etalchar{+}}83]{ref:Klatte-82}
Bernd Bank, J\"urgen Guddat, Diethard Klatte, Bernd Kummer, and Klaus Tammer.
\newblock {\em Nonlinear parametric optimization}.
\newblock Birkh\"auser Verlag, Basel, 1983.

\bibitem[Bil95]{ref:Bil-95}
Patrick Billingsley.
\newblock {\em Probability and measure}.
\newblock Wiley Series in Probability and Mathematical Statistics. John Wiley
  \& Sons Inc., New York, third edition, 1995.
\newblock A Wiley-Interscience Publication.

\bibitem[BS06]{ref:Bertsimas-06}
Dimitris Bertsimas and Melvyn Sim.
\newblock Tractable approximations to robust conic optimization problems.
\newblock {\em Mathematical Programming}, 107(1-2):5--36, 2006.

\bibitem[BtN98]{ref:BenTal-98}
Aharon Ben-tal and Arkadi Nemirovski.
\newblock Robust convex optimization.
\newblock {\em Mathematics of Operations Research}, 23:769--805, 1998.

\bibitem[BtN99]{ref:BenTal-99}
Aharon Ben-tal and Arkadi Nemirovski.
\newblock Robust solutions of uncertain linear programs.
\newblock {\em Operations Research Letters}, 25:1--13, 1999.

\bibitem[BtNR01]{ref:BenTal-01}
Aharon Ben-tal, Arkadi Nemirovski, and C.~Roos.
\newblock Robust solutions of uncertain quadratic and conic-quadratic problems.
\newblock In {\em Solutions of Uncertain Linear Programs: Math. Program}, pages
  351--376. Kluwer, 2001.

\bibitem[BV04]{ref:Boyd-04}
Stephen Boyd and Lieven Vandenberghe.
\newblock {\em Convex Optimization}.
\newblock Cambridge University Press, New York, NY, USA, 2004.

\bibitem[Cal09]{ref:Cal-09}
Giuseppe~C. Calafiore.
\newblock A note on the expected probability of constraint violation in sampled
  convex programs.
\newblock In {\em 18th IEEE International Conference on Control Applications
  Part of 2009 IEEE Multi-conference on Systems and Control}, pages 1788 --
  1791, july 2009.

\bibitem[Cal10]{ref:Cal-10}
Giuseppe~Carlo Calafiore.
\newblock Random convex programs.
\newblock {\em SIAM J. Optim.}, 20(6):3427--3464, 2010.

\bibitem[CC05]{ref:CalCam-05}
Giuseppe Calafiore and M.~C. Campi.
\newblock Uncertain convex programs: randomized solutions and confidence
  levels.
\newblock {\em Math. Program.}, 102(1, Ser. A):25--46, 2005.

\bibitem[CC06]{ref:CalCam-06}
Giuseppe~C. Calafiore and Marco~C. Campi.
\newblock The scenario approach to robust control design.
\newblock {\em IEEE Trans. Automat. Control}, 51(5):742--753, 2006.

\bibitem[CG08]{ref:CamGar-08}
M.~C. Campi and S.~Garatti.
\newblock The exact feasibility of randomized solutions of uncertain convex
  programs.
\newblock {\em SIAM J. Optim.}, 19(3):1211--1230, 2008.

\bibitem[CG11]{ref:Campi-11}
M.~C. Campi and S.~Garatti.
\newblock A sampling-and-discarding approach to chance-constrained
  optimization: feasibility and optimality.
\newblock {\em J. Optim. Theory Appl.}, 148(2):257--280, 2011.

\bibitem[CGP09]{ref:Campi-09}
Marco~C. Campi, Simone Garatti, and Maria Prandini.
\newblock The scenario approach for systems and control design.
\newblock {\em Annual Reviews in Control}, (2):149--157, 2009.

\bibitem[CLF12]{ref:Calafiore-12}
Giuseppe~C. Calafiore, D.~Lyons, and L.~Fagiano.
\newblock On mixed-integer random convex programs.
\newblock In {\em Decision and Control (CDC), 2012 IEEE 51st Annual Conference
  on}, pages 3508--3513, 2012.

\bibitem[DPR13]{ref:Duf-13}
Fran{\c{c}}ois Dufour and Tom{\'a}s Prieto-Rumeau.
\newblock Finite linear programming approximations of constrained discounted
  {M}arkov decision processes.
\newblock {\em SIAM J. Control Optim.}, 51(2):1298--1324, 2013.

\bibitem[Gah96]{ref:Gahinet-96}
Pascal Gahinet.
\newblock Explicit controller formulas for lmi-based h-infinity synthesis.
\newblock {\em Automatica}, 32(7):1007--1014, July 1996.

\bibitem[GOL98]{ref:Ghaoui-98}
Laurent~El Ghaoui, Francois Oustry, and HervŽ Lebret.
\newblock Robust solutions to uncertain semidefinite programs.
\newblock {\em SIAM Journal on Optimization}, 9(1):33--52, 1998.

\bibitem[KT12]{ref:KanTak}
Takafumi Kanamori and Akiko Takeda.
\newblock Worst-case violation of sampled convex programs for optimization with
  uncertainty.
\newblock {\em Journal of Optimization Theory and Applications},
  152(1):171--197, 2012.

\bibitem[LVLM08]{ref:LecLyg-08}
Andrea Lecchini-Visintini, John Lygeros, and Jan~M. Maciejowski.
\newblock {Approximate domain optimization for deterministic and expected value
  criteria}.
\newblock Technical report, March 2008.
\newblock [Online]. Available:
  \url{http://control.ee.ethz.ch/index.cgi?page=publications;action=details;id=3048}.

\bibitem[LVLM10]{ref:LecLyg-10}
Andrea Lecchini-Visintini, John Lygeros, and Jan~M. Maciejowski.
\newblock Stochastic optimization on continuous domains with finite-time
  guarantees by {M}arkov chain {M}onte {C}arlo methods.
\newblock {\em IEEE Trans. Automat. Control}, 55(12):2858--2863, 2010.

\bibitem[MA04]{ref:Lex-04}
R~Timothy Marler and Jasbir~S Arora.
\newblock Survey of multi-objective optimization methods for engineering.
\newblock {\em Structural and multidisciplinary optimization}, 26(6):369--395,
  2004.

\bibitem[MEL13]{ref:Mohajerin-FDI-TAC}
Peyman Mohajerin~Esfahani and John Lygeros.
\newblock A tractable fault detection and isolation approach for nonlinear
  systems with probabilistic performance.
\newblock Technical report, February 2013.
\newblock [Online]. Available:
  \url{http://control.ee.ethz.ch/index.cgi?page=publications&action=details&id=4344}.

\bibitem[MEVAL12]{ref:Mohajerin_CDC12}
Peyman Mohajerin~Esfahani, Maria Vrakopoulou, Goran Andersson, and John
  Lygeros.
\newblock A tractable nonlinear fault detection and isolation technique with
  application to the cyber-physical security of power systems.
\newblock In {\em 51th IEEE Conference Decision and Control}, 2012.
\newblock [Online]. Full version:
  \url{http://control.ee.ethz.ch/index.cgi?page=publications;action=details;id=4196}.

\bibitem[MEVM{\etalchar{+}}10]{ref:Mohajerin_ACC10}
Peyman Mohajerin~Esfahani, Maria Vrakopoulou, Kostas Margellos, John Lygeros,
  and Goran Andersson.
\newblock Cyber attack in a two-area power system: Impact identification using
  reachability.
\newblock In {\em American Control Conference}, pages 962 -- 967, 2010.

\bibitem[NO08]{ref:Nedic-08}
Angelia Nedi{\'c} and Asuman Ozdaglar.
\newblock Approximate primal solutions and rate analysis for dual subgradient
  methods.
\newblock {\em SIAM J. Optim.}, 19(4):1757--1780, 2008.

\bibitem[PRC12]{ref:PagCam-12}
B.~K. Pagnoncelli, D.~Reich, and M.~C. Campi.
\newblock Risk-return trade-off with the scenario approach in practice: a case
  study in portfolio selection.
\newblock {\em J. Optim. Theory Appl.}, 155(2):707--722, 2012.

\bibitem[Pr{\'e}95]{ref:Prekopa-95}
Andr{\'a}s Pr{\'e}kopa.
\newblock {\em Stochastic Programming}.
\newblock Mathematics and Its Applications. Springer, 1995.

\bibitem[Roc97]{ref:Rockafellar-97}
R.~Tyrrell Rockafellar.
\newblock {\em Convex analysis}.
\newblock Princeton Landmarks in Mathematics and Physics Series. PRINCETON
  University Press, 1997.

\bibitem[Rud87]{ref:Rudin-87}
Walter Rudin.
\newblock {\em Real and complex analysis}.
\newblock Mathematics series. McGraw-Hill, 1987.

\bibitem[RW10]{ref:Rockafellar-10}
R.~Tyrrell Rockafellar and Roger~J.B. Wets.
\newblock {\em Variational Analysis}.
\newblock Grundlehren der mathematischen Wissenschaften. Springer, 2010.

\bibitem[SDR09]{ref:Shapiro-09}
Alexander Shapiro, Darinka Dentcheva, and Andrzej Ruszczy{\'n}ski.
\newblock {\em Lectures on Stochastic Programming: Modeling and Theory}.
\newblock MPS-SIAM series on optimization. Society for Industrial and Applied
  Mathematics, 2009.

\bibitem[Sha03]{ref:Shao-03}
Jun Shao.
\newblock {\em Mathematical statistics}.
\newblock Springer Texts in Statistics. Springer-Verlag, New York, second
  edition, 2003.

\bibitem[ZMEAL13]{ref:ZamMoh-TAC-13}
Majid Zamani, Peyman Mohajerin~Esfahani, Alessandro Abate, and John Lygeros.
\newblock Symbolic models for stochastic control systems.
\newblock Technical report, February 2013.
\newblock [Online]. Available: \url{http://arxiv.org/abs/1302.3868}.

\end{thebibliography}

\end{document}